\documentclass[10pt]{article}
\usepackage{amsmath}
\usepackage{amsfonts}
\usepackage{amssymb}
\usepackage{graphicx}
\usepackage{mathrsfs}
\usepackage{xcolor}
\usepackage{bbm}
\usepackage{verbatim}
\usepackage[T1]{fontenc}
\usepackage{dsfont}
\usepackage{mathrsfs}
\usepackage[body={15.5cm,21cm}, top=3cm]{geometry}
\usepackage{paralist}
\usepackage{indentfirst}
\setlength{\parindent}{0.5cm}

\allowdisplaybreaks[4]
\usepackage{hyperref}
\providecommand{\U}[1]{\protect \rule{.1in}{.1in}}
\numberwithin{equation}{section}
\newtheorem{Theorem}{Theorem}[section]

\newtheorem{lemma}[Theorem]{Lemma}

\newtheorem{problem}[Theorem]{Problem}

\newtheorem{remark}[Theorem]{Remark}

\newenvironment{proof}[1][Proof]{\noindent \textbf{#1.} }{\  \rule{0.5em}{0.5em}}
\allowdisplaybreaks[2]
\begin{document}
\title{Linear-quadratic extended mean field games with common noises}
\author{Tianjiao Hua \thanks{School of Mathematical Sciences, Shanghai Jiao Tong University, China (htj960127@sjtu.edu.cn)}
\and
Peng Luo \thanks{School of Mathematical Sciences, Shanghai Jiao Tong University, China (peng.luo@sjtu.edu.cn). Research supported
by the National Natural Science Foundation of China (No. 12101400).}}
\maketitle
\begin{abstract}
In this paper, we consider a class of linear quadratic extended mean field games (MFGs) with common noises where the state coefficients and the cost functional vary with the mean field term in a nonlinear way. Based on stochastic maximum principle, solving the mean field game is transformed into solving a conditional mean field forward-backward stochastic differential equation (FBSDE). We first establish solvability for a type of (more general) conditional mean field FBSDEs under monotonicity conditions. We further provide some regularity results which lead to classical solutions for the associated master equations. In particular, the linear quadratic extended mean field game is solved and classical solution for (extended mean field game) master equation is obtained.
\end{abstract}

\textbf{Key words}: extended mean field game, common noise, forward-backward stochastic differential equation, conditional mean field  FBSDE, master equation, monotonicity condition.

\textbf{MSC-classification}: 60H30, 93H20, 49N70.
\section{Introduction}
\indent   Mean field games (MFGs for short) were independently proposed by Lasry and Lions in \cite{lasry2007mean} from a partial differential equation perspective and Huang, Caines, and Malham\'{e} \cite{huang2007large} from an engineering perspective. Since it provides an effective method to enable each agent to take advantage of mean field interaction to avoid the difficulty of dimensionality, MFGs theory has grown tremendously. Carmona and Delarue approached the MFG problems from a probabilistic point of view (see e.g. \cite{carmona2013probabilistic,carmona2018probabilistic,10.1214/14-AOP946}). When the mean field interaction is not only via the distribution of the states but also controls, this MFG system is called extended mean field game, also known as mean field game of controls. Extended mean field games have wide applications in practical problems, such as trade
crowding, price impact model, see, e.g. \cite{carmona2018probabilistic,carmona2021probabilistic,alasseur2020extended,cardaliaguet2018mean}. 

In this paper, we aim to 
 establish a unique equilibrium for a type of linear quadratic extended mean field games with common noises, where
the mean field interaction is only through the (conditional) expectations of the state and control and is allowed to vary in a nonlinear way in the state coefficients and cost functional. Using stochastic maximum principle and consistency condition, we show that the analysis of the extended mean field games with common noises reduces to solving conditional mean field FBSDE of the form
\begin{equation} \label{fbsde}
    \left\{
    \begin{aligned}
dX_{t} &= \xi+\int_{0}^{t}\Big[b_{1}(s)X_{s}+b_{2}(s)Y_{s}+b_{0}\left(s,E[X_{s}|\mathcal{F}^{W^{0}}_{s}],E[X_{s}|\mathcal{F}^{W^{0}}_{s}]\right)\Big]ds+\int_{0}^{t}\sigma(s)dW_{s}\\
&\quad +\int_{0}^{t}\Big[\sigma_{1}(s)X_{s}+ Y_{s}^{\mathsf{T}}\sigma_{2}(s)+\sigma_{0}\left(s,E[X_{s}|\mathcal{F}^{W^{0}}_{s}],E[Y_{s}|\mathcal{F}^{W^{0}}_{s}]\right) \Big]dW_{s}^{0},\\
dY_{t} &=h_{1}X_{T}+h_{2}\left(\mathbb{E}[X_{T}|\mathcal{F}_{T}^{W^{0}}]\right)+\int_{t}^{T} \Big[f_{1}(s)X_{s}+f_{2}(s)Y_{s}+f_{0}\left(s,E[X_{s}|\mathcal{F}^{W^{0}}_{s}],E[X_{s}|\mathcal{F}^{W^{0}}_{s}]\right)\Big]ds\\
&\quad -\int_{t}^{T}Z_{s}dW_{s}-\int_{t}^{T}Z^{0}_{s}dW_{s}^{0}.
\end{aligned}\right.
\end{equation}
Here, $W$ and $W^{0}$ are two independent multidimensional Brownian motions and $(\mathcal{F}^{W^{0}}_{t})_{t\geq 0}$ is the filtration generated by $W^{0}$. Such conditional mean field FBSDE \eqref{fbsde} plays an important role in mean field games and mean field control problems when the mean field interaction is only through the (conditional) expectations of the state and control.

The presence of the common noise clearly brings extra complexity to the problem since the mean field interaction becomes stochastic rather than deterministic. Therefore, the theoretical literature on MFGs with common noise remained relatively limited until recent years, not to mention extended MFGs with common noises. To the best of our knowledge, there are mainly three methods to deal with MFGs with common noises. The first method  is continuation method. Ahuja et al. \cite{ahuja2019forward} used lifting technique to convert conditional mean field  FBSDEs to functional FBSDEs and give its well-posedness by continuation method. The second method is to use the contraction of control in specific optimal control problem situation, such as \cite{ahuja2016wellposedness} and \cite{huang2022mean}, where they apply the Banach fixed point theorem to show existence over small duration and attempt to extend the solution over arbitrary time. \cite{ahuja2016wellposedness} considered a simple structure which is extended to a more general structure with similar method in \cite{huang2022mean}. The third method is presenting a theory establishing the existence of strong solutions without relying on uniqueness under submodularity conditions by Tarski's fixed point theory, see, e.g., \cite{dianetti2022strong,dianetti2021submodular,dianetti2022unifying}. However, they fail to obtain the uniqueness of the solution. There are also some studies on the weak mean field equilibria when studying MFGs with common noises, initiated in \cite{carmonaweak} and extended in  \cite{djete2023mean,carmona2018probabilistic,barraso2022}.

Our main contributions in this paper are the following: first, we prove the existence and uniqueness of solution for the conditional mean field FBSDE \eqref{fbsde}, which encompasses the Hamiltonian system arising from the linear quadratic extended mean field games with common noises; second, we obtain (globally) classical solutions for associated master equations,  which incorporates the master equations for the extended mean field games with common noises. Our technique to solve the conditional mean field FBSDE \eqref{fbsde} includes two steps. In the first step, we fix the conditional expectation terms in the coefficients and obtain the global solvability for a classical FBSDE. In the second step, we introduce a map and convert finding the fixed point to solving another classical FBSDE. In both steps, we make full use of the method imposed by \cite{hua2022unified} to get the well-posedness of the corresponding FBSDEs.
It is worth mentioning that the unique solution satisfies a uniformly Lipschitz continuity property, which further contributes to guarantee the regularity of the decoupling field of conditional mean field FBSDE \eqref{fbsde}. Based on this and following the probabilistic perspective introduced by Chassagneux, Crisan and Carmona in \cite{chassagneux2014probabilistic}, we are able to establish the global solvability of the related master equations. Specifically, we show that the decoupling field of conditional mean field FBSDE is indeed the unique classical solution of the master equation based on the regularity of the decoupling field. 

  Since introduced by Lions' lectures \cite{lions2007cours} at \textit{Coll{\`e}ge de France}, master equations have played an increasingly important role in studying equilibrium of mean field games or mean field control problems. When the time duration is small enough and the data are sufficiently smooth, master equation usually admits a classical solution, see, e.g. \cite{bensoussan2019control,gangbo2015existence,cardaliaguet2022splitting}. However, the global solvability of master equation is more challenging. Monotonicity conditions play key role in the global well-posedness of the master equation as well as the uniqueness of mean field equilibria and solutions to mean field game systems. The famous one among them is Lasry-Lions monotonicity condition, which has been used to establish the global well-posedness of mean field games master equations in various setting, we refer the reader to \cite{chassagneux2014probabilistic,carmona2018probabilistic,cardaliaguet2019master,gangbo2022global,bertucci2021master}. It should be noted that the above results not only require that Lasry-Lions monotonicity is satisfied but also that the Hamiltonians is separable. Recently, for nonseparable Hamiltonians, two other criterions known as displacement monotonicity and anti-monotinicity are proposed to obtain the global well-posedness of classical solution for the master equations, see, e.g. \cite{ahuja2016wellposedness,gangbo2022global,gangbo2022mean,huang2022mean,mou2022mean}. More recently, \cite{graber2023110095} proposed two new types of monotonicity conditions based on the global-in-time optimal response to an arbitrary crowd trajectory to ensure the uniqueness of equilibrium of mean field games but they did not consider the well-posedness of master equations. In this paper, we give a new type of monotonicity conditions when the mean field interaction is only via the (conditional) expectations of states and controls. The monotonicity condition on the one hand, ensures the uniqueness of equilibrium of the extended MFGs with common noises, and on the other hand, allows us to obtain the global well-posedness of classical solutions to the related master equation with nonseparable Hamiltonian. Moreover, it is worth emphasizing that our monotonicity conditions operate on the function of the expectation, not the measure and  are in dichotomy with the Lasry-Lions monotonicity and displacement monotonicity conditions. 

Let us now compare our results to those in the existing literature and point out the novelties of
our paper. As mentioned above, \cite{huang2022mean} and \cite{ahuja2019forward} also investigated MFGs with common noises. Compared with \cite{huang2022mean}, we remove the restrictions that the
dependence of the state coefficient on the conditional distribution is sufficiently small, or 
the convexity parameter of the running cost on the control is sufficiently large. \cite{ahuja2019forward} required that the state dynamics does not depend on the conditional distributions of the state to ensure the monotonicity conditions hold, which limits the application scope of the method to practical problems. Moreover, \cite{huang2022mean} and \cite{ahuja2019forward}  only include the interaction of states and in this paper, we also consider the interaction with controls. From the perspective of global solvability of master equation, we consider a situation that does not appear in \cite{chassagneux2014probabilistic}, the master equation for extended MFGs with common noises. By using a different technique to prove the regularity of the decoupling field, we do not need the Hamiltonian to be separable and remove the boundedness assumption for the diffusion process. Compared with our previous work \cite{hua2023well}, we extend the master equations from first order to second order in the presence of common noises. The most closely related paper is \cite{li2023linear}, which also investigated a linear quadratic extended mean field game with common noise. By contrast, we consider a more general form of state dynamics and running cost functional. Following different technique, we do not need the boundedness assumption for the nonlinear functional and the non-degeneracy with respect to the common noise. For detailed comparison, please see remark \ref{comparison remark}. 

The rest of the paper is organized as follows. In section 2, we analyse a class of extended mean field games with common noises and establish the global well-posedness of the stochastic Hamiltonian system and the corresponding master equations. The details of the proof are presented in section 3, where we consider a type of (more general) conditional mean field FBSDEs and associated master equations. 

\textbf{Notations and Conventions.}
 Let $(\Omega, \mathcal{F}, \mathbb{F}, \mathbb{P})$ be a complete filtered probability space which can support two independent d-dimensional Brownian motions: $W$ and $W^0$.
For any filtration $\mathbb{G}$, we introduce the following spaces: 
$\beta \in$ $L_{\mathbb{G}}^2([0, T] ; \mathbb{R}^{n})$ if $\beta: \Omega \times[0, T] \rightarrow \mathbb{R}^{n}$ is a $\mathbb{G}$-progressively measurable process such that $\mathbb{E}\left[\int_0^T\left|\beta_t\right|^2 d t\right]$ $<\infty$; $\alpha \in S^{2}_{\mathbb{G}}([0,T];\mathbb{R}^{n})$ if $\alpha: \Omega \times[0, T] \rightarrow \mathbb{R}^{n}$ is a $\mathbb{G}$-progressively measurable process such that
$\mathbb{E}\left[ \sup\limits _{0 \leq t \leq T}|\alpha_{t}|^{2}\right] < \infty$. For any $\sigma$-field $\mathcal{G}$, we denote $\xi \in L_{\mathcal{G}}^2$ if $\xi: \Omega \rightarrow \mathbb{R}$ is a $\mathcal{G}$-measurable random variable such that $\mathbb{E}\left[|\xi|^2\right]<\infty$.

Unless otherwise stated, all equalities and inequalities between random variables and processes will be understood in the $\mathbb{P}$-a.s. and $\mathbb{P} \otimes d t$-a.e. sense, respectively. $|\cdot|$ denotes the Euclidean norm. $C^k(\mathbb{R}^{n};\mathbb{R}^k)$ denotes the space of all $\mathbb{R}^k$-valued and continuous functions $f$ on $\mathbb{R}^{n}$ with continuous derivatives up to order $k$.
$C^0([0,T] \times \mathbb{R}^{n};\mathbb{R}^k)$ denotes the space of all $\mathbb{R}^k$-valued and continuous functions $f$ on $[0,T] \times \mathbb{R}^{n}$. $C^1([0,T] \times \mathbb{R}^{n};\mathbb{R}^k)$ denotes the space of all $\mathbb{R}^k$-valued and continuous functions $f$ on $[0,T] \times \mathbb{R}^{n}$  whose partial derivatives $\frac{\partial f}{\partial t}, \frac{\partial f}{\partial x_i},  1 \leq i \leq n$, exist and are continuous. 
$C^{1,2}([0, T] \times \mathbb{R}^{n};\mathbb{R}^k)$ denotes the space of  all $\mathbb{R}^k$-valued and continuous functions $f$ on $[0, T] \times \mathbb{R}^{n}$ whose partial derivatives $\frac{\partial f}{\partial t}, \frac{\partial f}{\partial x_i}, \frac{\partial^2 f}{\partial x_i \partial x_j}, 1 \leq i, j \leq n$, exist and are continuous. For $x, y \in \mathbb{R}^{n}, x \leq y$ is understood component-wisely, i.e., $x \leq y$ if and only if $x^{i} \leq y^{i}$ for all $i=1, \ldots, n$. Throughout the paper, for any $x \in \mathbb{R}$ and any function $\phi(x)$, we will use the following convention
 \begin{equation*}
    \frac{\phi(x)-\phi(x)}{x-x}:=0.
\end{equation*}
\section{Extended mean field games with common noises}

In this section, we investigate a class of extended mean field games with common noises. First in subsection \ref{section 2.1} and subsection \ref{section 2.2}, we formulate the extended mean field games from N-player stochastic differential games of controls. Then in subsection \ref{section 2.3} and \ref{section 2.4}, the optimization problem is transformed into solving a conditional mean field FBSDE and we provide sufficient conditions on the data for the existence and uniqueness of solutions to \eqref{consistency fbsde} and the corresponding master equation \eqref{MP master equation}, which follow immediately from the well-posedness of a type of (more general) conditional mean field FBSDEs \eqref{conditional fbsde} and associated master equations \eqref{general master equation} established in Section \ref{section 3}.
\subsection{N-player stochastic differential games of controls}\label{section 2.1}
 In this subsection, we consider a class of linear-quadratic (LQ) $N$-player games of controls. For a given $T>0$, let $(\widetilde{\Omega}, \tilde{\mathcal{F}}, \widetilde{\mathbb{F}}, \widetilde{\mathbb{P}})$ be a complete filtered probability space which can support $N+1$ independent d-dimensional Brownian motions: $W^i, 1 \leq i \leq N$, and $W^0$. Here $W^i$ denotes the idiosyncratic noises for the $i-$th player and $W^0$ denotes the common noise for all the players. Let $\widetilde{\mathbb{F}}:=\left\{\widetilde{\mathcal{F}}_t\right\}_{0 \leq t \leq T}$ where $\widetilde{\mathcal{F}}_t:=\left(\vee_{i=1}^N \mathcal{F}_t^{W^i}\right) \vee \mathcal{F}_t^{W^0} \vee \tilde{\mathcal{F}}_0$ and let $\widetilde{\mathbb{P}}$ have no atom in $\tilde{\mathcal{F}}_0$.

For $1 \leq i \leq N$, let $\xi^i \in L_{\widetilde{\mathcal{F}}_0}^2$ be independent and identically distribution (i.i.d.) random variables. Denote $x:=\left(x^1, \ldots, x^N\right)$ and $\alpha:=\left(\alpha^1, \ldots, \alpha^N\right)$. The dynamic of $i$-th player's state process $x^{i} \in \mathbb{R}$ is 
\begin{equation*}
\left\{
\begin{aligned}
 d x_t^i&=\left[A_{t} x_t^i+B_{t} \alpha_t^i+f\left(t,\nu_{\boldsymbol{x}_t}^{N, i}\right)+b\left(t,\mu_{\alpha_t}^{N, i}\right)\right] d t+\sigma d W_t^i+\sigma_0 d W_t^0, \\
x_0^i&=\xi^i,   
\end{aligned}\right.
\end{equation*}
where $\alpha^i \in \widetilde{\mathcal{U}}_{a d}[0, T]=\left\{\alpha \mid \alpha \in L_{\widetilde{\mathbb{F}}}^2([0, T] ; \mathbb{R})\right\}$, $\sigma,\sigma_{0}$ are constant vectors, and
\begin{equation*}
A_{t},B_{t}:[0,T]\rightarrow \mathbb{R},\quad
    f,b:[0,T]\times \mathbb{R}\rightarrow \mathbb{R}.
    \end{equation*}
The interactions among players are via the average of all other players' states and controls
\begin{equation*}
\mu_{\boldsymbol{\alpha}}^{N, i}:=\frac{1}{N-1} \sum_{j \neq i} \alpha^j, \quad \nu_{\boldsymbol{x}}^{N, i}:=\frac{1}{N-1} \sum_{j \neq i} x^j.
\end{equation*}
The cost functional of $i$-th player is assumed to be
\begin{equation*}
\begin{aligned}
\mathcal{J}^i\left(\alpha^i, \boldsymbol{\alpha}^{-i}\right):= & \frac{1}{2} \mathbb{E}\Bigg\{\int_0^T\left[Q_{t}\left(x_t^i+l\left(t,\nu_{\boldsymbol{x}_t}^{N, i}\right)\right)^2+R_{t}\left(\alpha_t^i+h\left(t,\mu_{\boldsymbol{\alpha}_t}^{N, i}\right)\right)^2+2F_{t}x_{t}^{i}(\alpha_{t}+q\left(t,\mu_{\boldsymbol{\alpha}_t}^{N, i}\right))\right] d t\\
& \quad\quad\quad +G\left(x_T^i+g\left(\nu_{\boldsymbol{x}_T}^{N, i}\right)\right)^2\Bigg\},
\end{aligned}
\end{equation*}
where $\boldsymbol{\alpha}^{-i}=\left(\alpha^1, \ldots, \alpha^{i-1}, \alpha^{i+1}, \ldots, \alpha^N\right)$ denotes a strategy profile of other players excluding the $i$-th player, G is a constant and
\begin{equation*}
    Q_{t},R_{t},F_{t} : [0,T] \rightarrow \mathbb{R},\quad 
    l,h,q:[0,T]\times \mathbb{R}\rightarrow \mathbb{R},\quad g:\mathbb{R}\rightarrow \mathbb{R}.
\end{equation*}

 The above $N$-player game problem is to find the Nash equilibrium $\alpha^*$, that is to find a strategy profile $\alpha^*=\left(\alpha^{*, 1}, \ldots, \alpha^{*, N}\right)$ where $\alpha^{*, i}$ $\in \widetilde{\mathcal{U}}_{a d}(0, T), 1 \leq i \leq N$, such that
\begin{equation*}
\mathcal{J}^i\left(\alpha^{*, i}, \boldsymbol{\alpha}^{*,-i}\right)=\inf _{\alpha^i \in \widetilde{\mathcal{U}}_{a d}(0, T)} \mathcal{J}^i\left(\alpha^i, \boldsymbol{\alpha}^{*,-i}\right).
\end{equation*}

We should mention that when the functions $A_{t}, B_{t}, Q_{t}, R_{t}, f(t,\cdot),b(t,\cdot),l(t,\cdot),h(t,\cdot)$ are independent of $t$, \cite{li2023linear} has investigated similar N-player stochastic differential games of controls. However, they didn't take into account cross term and can only deal with non-degenerate situations. To the best of our knowledge, if the non-degenerate condition is removed, there is no result yet in the N-player games. Therefore, we will consider the limit problem, which can be solved by our approach without the non-degenerate condition. The N-play games without non-degenerate condition are left for future work.
\subsection{Formulation of the extended MFGs with common noises}\label{section 2.2}
We now formulate the extended MFGs with common noises by taking the limit of the N-player stochastic differential games as $N\rightarrow \infty$. Let $(\Omega, \mathcal{F}, \mathbb{F}, \mathbb{P})$ be a complete filtered probability space which can support two independent d-dimensional Brownian motions: $W$ and $W^0$. Here $W$ denotes the idiosyncratic noise and $W^0$ denotes the common noise. We let $\mathbb{F}:=\left\{\mathcal{F}_t\right\}_{t \in[0, T]}$, where $\mathcal{F}_t:=\mathcal{F}_t^W \vee \mathcal{F}_t^{W^0} \vee \mathcal{F}_0$, and let $\mathbb{P}$ have no atom in $\mathcal{F}_0$ so it can support any measure on $\mathbb{R}$ with a finite second moment. We denote $\mathbb{F}^0:=\{\mathcal{F}_t^{W^0}\}_{t \in[0, T]}$, $ \mathcal{U}_{a d}(0, T):=\left\{\alpha \mid \alpha \in L_{\mathbb{F}}^2([0, T] ; \mathbb{R})\right\}$. The problem of extended MFGs with common noises is defined as follows and the functions appearing are as provided in the previous subsection.
\begin{problem}\label{problem}
   Find an optimal control $\alpha^{*} \in \mathcal{U}_{a d}(0, T) $ for the stochastic control problem 
   \begin{equation}
   \left\{\begin{aligned}
        &\alpha^{*} \in \underset{\alpha \in \mathcal{U}_{a d}(0, T)}{\operatorname{argmin}} J(\alpha \mid \mu,\nu):=\frac{1}{2} \mathbb{E}\Bigg\{\int_0^T\Big[Q_{t}\left(x_t^{\xi, \alpha}+l\left(t,\nu_t\right)\right)^2+R_{t}\Big(\alpha_t+h\left(t,\mu_t\right)\Big)^2\\
   &\quad \quad \quad \quad 
 \quad  \quad 
\quad \quad \quad \quad \quad \quad \quad \quad  \quad +2F_{t}x_{t}^{\xi, \alpha}\Big(\alpha_{t}+q(t,\mu_{t})\Big)\Big] d t+G\left(x_T^{\xi, \alpha}+g\left(\nu_T\right)\right)^2\Bigg\};\\
    & x_{t}^{\xi,\alpha} = \xi +\int_{0}^{t}\left(A_{s}x_{s}^{\xi,\alpha}+B_{s}\alpha_{s}+f(s,\nu_{s})+b(s,\mu_{s})\right)ds+\sigma dW_{s}+\sigma_{0}dW^{0}_{s},\xi 
    \in L^{2}_{\mathcal{F}_{0}},\mu,\nu\in L^{2}_{\mathbb{F}^{0}}([0,T];\mathbb{R});\\
    &\mu_{t} = \mathbb{E}[\alpha_{t}^{*}|\mathcal{F}_{t}^{W^{0}}], \quad \nu_{t} = \mathbb{E}[x^{\xi,\alpha^{*}}_{t}|\mathcal{F}^{W^{0}}_{t}].
\end{aligned}\right.
\end{equation} 
\end{problem}

We would like to emphasize that when minimizing the cost functional in the control problem above, $(\mu,\nu)$ is exogenous and
and is not affected by a player's control. Thus, problem \ref{problem} is a standard control problem with an additional consistency condition.
\subsection{Solution of the problem} \label{section 2.3}
In this subsection, we solve the extended MFG problem introduced in previous subsection by stochastic maximum principle. Now, we list all the assumptions on data.\\
\begin{itemize}
    \item [(B1)] The mappings $A_{t},B_{t},R_{t},Q_{t},F_{t}:[0,T] \rightarrow \mathbb{R}$ are measurable and bounded. $\sigma,\sigma_{0}$ are constant vectors and $G$ is a constant. Moreover, $G>0$, $R_{t}>0$ and  $Q_{t}-F_{t}^{2}R^{-1}_{t}\geq 0$ for any $t\in[0,T]$.
    \item[(B2)] The functions $ f(t,x),\ b(t,x),\ l(t,x),\ h(t,x):[0,T]\times \mathbb{R}\rightarrow \mathbb{R}$, $g(x):\mathbb{R}\rightarrow \mathbb{R}$ are measurable and uniformly Lipschitz continuous with respect to $x$. Moreover,\begin{equation*}
        \int_{0}^{T}|f(t,0)|^{2}+|b(t,0)|^{2}+|l(t,0)|^{2}+|h(t,0)|^{2}dt\leq \infty.
    \end{equation*}
     \item [(B3)] There exists some $\varepsilon_0>0$ such that $h(t,\cdot) \in C^1(\mathbb{R};\mathbb{R})$  and $\left|1+h^{\prime}(t,\cdot)\right| \geq \varepsilon_0$ for any $t\in[0,T]$.
\item [(B4)] $f(t,\cdot),\ b(t,\cdot),\ l(t,\cdot),\ h(t,\cdot),\  g(\cdot
) \in C^{2}(\mathbb{R};\mathbb{R})$ and their first and second order derivatives are all bounded for all $t\in[0,T]$. Moreover the derivatives satisfy one of the following conditions :\\
(a)
    \begin{equation*} 
       G(1+g^{\prime}) \geq 0, \quad Q(1+l^{\prime})-F_{t}^{2}R^{-1}_{t}+\frac{(h^{\prime}-q^{\prime})F^{2}_{t}R_{t}^{-1}}{1+h^{\prime}} \geq 0,\quad \frac{(-B_{t}R_{t}^{-1})(B_{t}+b^{\prime})}{1+h^{\prime}}\leq 0.
    \end{equation*}
(b)\begin{equation*} 
       G(1+g^{\prime}) \leq 0, \quad Q(1+l^{\prime}) -F_{t}^{2}R^{-1}_{t}+\frac{(h^{\prime}-q^{\prime})F^{2}_{t}R_{t}^{-1}}{1+h^{\prime}} \leq 0,\quad \frac{(-B_{t}R_{t}^{-1})(B_{t}+b^{\prime})}{1+h^{\prime}}\geq 0.
    \end{equation*}
\end{itemize}

Following standard technique solving MFG problems, we divide our analysis into two steps.

\textbf{Step 1:}
Given $(\mu, \nu)\in L^{2}_{\mathbb{F}^{0}}([0,T];\mathbb{R})\times L^{2}_{\mathbb{F}^{0}}([0,T];\mathbb{R})$,
using the stochastic maximum principle, the minimization problem in Problem \ref{problem}  reduces to solve the following FBSDE
\begin{equation}
\left\{\begin{aligned}
d x_t^{\mu,\nu,\xi} &=  {\left[(A_{t}-B_{t}F_{t}R_{t}^{-1}) x_t^{\mu,\nu,\xi}-B_{t}^2 R_{t}^{-1} y_t^{\mu,\nu,\xi}-B_{t} h\left(t,\mu_t\right)+f\left(t,\nu_t\right)+b\left(t,\mu_t\right)\right] d t } \\
&\quad +\sigma d W_t+\sigma_{0}dW_{t}^{0}, \\
d y_t^{\mu,\nu,\xi} &=  -\left[(A_{t}-B_{t}F_{t}R^{-1}_{t}) y_t^{\mu,\nu,\xi}+(Q_{t}-F_{t}^{2}R^{-1}_{t}) x_t^{\mu,\nu,\xi}+Q_{t} l\left(t,\nu_t\right)+F_{t}q(t,\mu_{t})-F_{t}h(t,\mu_{t})\right] d t\\
&\quad +z_t^{\mu,\nu,\xi} d W_t
+z_{t}^{0,\mu,\nu,\xi}dW_{t}^{0}, \\
x_0^{\mu,\nu,\xi} & =  \xi,\quad  y_T^{\mu,\nu,\xi}=G\left(x_T^{\mu,\nu,\xi}+g\left(\nu_T\right)\right).
\end{aligned}\right.\label{fiXed fbsde}
\end{equation}
Moreover, the corresponding optimal control process is given by
\begin{equation}
\alpha_t^{*}=-R_{t}^{-1} B_{t} y_t^{\mu,\nu,\xi}-R^{-1}_{t}F_{t}x_{t}^{\mu,\nu,\xi}-h\left(t,\mu_t\right). \label{optimal control}
\end{equation}

\textbf{Step 2:}
Taking conditional expectation in \eqref{optimal control} with respect to $\mathcal{F}_{t}^{W^{0}}$, we have
\begin{equation*}
\mathbb{E}[\alpha_{t}^{*}| \mathcal{F}_{t}^{W^{0}}]=-R_{t}^{-1} B_{t}\mathbb{E}[y_{t}^{\mu,\nu,\xi}|\mathcal{F}_{t}^{W^{0}}]-R_{t}^{-1}F_{t}\mathbb{E}[x_{t}^{\mu,\nu,\xi}|\mathcal{F}_{t}^{W^{0}}]-h\left(t,\mu_t\right).
\end{equation*}
The consistency condition in Problem \ref{problem} leads to
\begin{equation}
\mu_t+h\left(t,\mu_t\right)=-R_{t}^{-1} B_{t} \mathbb{E}[y_{t}^{\mu,\nu,\xi}| \mathcal{F}_{t}^{W^{0}}]-R_{t}^{-1}F_{t}\mathbb{E}[x_{t}^{\mu,\nu,\xi}|\mathcal{F}_{t}^{W^{0}}],
\end{equation}
and
\begin{equation}
    \nu_{t} = \mathbb{E}[x_{t}^{\mu,\nu,\xi}|\mathcal{F}_{t}^{W^{0}}].
\end{equation}

Now, we would like to express $\mu_{t}$ as a function of $x_{t}^{\mu,\nu,\xi}$ and $y_{t}^{\mu,\nu,\xi}$.
Applying Assumption (B3), we can use the inverse function theorem to derive that there exists a uniformly Lipschitz continuous function $\rho:[0,T]\times \mathbb{R} \rightarrow \mathbb{R}$ such that
\begin{equation*}
\mu_t=\rho\left(t,-R_{t}^{-1} B_{t} \mathbb{E}[y_{t}^{\mu,\nu,\xi}|\mathcal{F}_{t}^{W^{0}}]-R_{t}^{-1}F_{t}\mathbb{E}[x_{t}^{\mu,\nu,\xi}|\mathcal{F}_{t}^{W^{0}}]\right),
\end{equation*}
where
\begin{equation*}
\rho^{\prime} = \frac{1}{1+h^{\prime}}, \quad 
\left|\rho^{\prime}\right|=\left|\frac{1}{1+h^{\prime}}\right| \leq \frac{1}{\varepsilon_0}.
\end{equation*}
Then the corresponding conditional mean field FBSDE of the extended mean field game is the following FBSDE
\begin{equation}\label{consistency fbsde}
\left\{\begin{aligned}
d x_t^{\xi}= & \left[(A_{t}-B_{t}F_{t}R_{t}^{-1}) x_t^{\xi}-B_{t}^2 R_{t}^{-1} y_t^{\xi}-B_{t} h\left(t,\rho\left(t,-R_{t}^{-1} B_{t} \mathbb{E}[y_{t}^{\xi}|\mathcal{F}_{t}^{W^{0}}]-R_{t}^{-1}F_{t}\mathbb{E}[x_{t}^{\xi}|\mathcal{F}_{t}^{W^{0}}]\right)\right)\right.\\
&\left.+f\left(t,\mathbb{E}[x_{t}^{\xi}|\mathcal{F}_{t}^{W^{0}}]\right)+b\left(t,\rho\left(t,-R_{t}^{-1} B_{t} \mathbb{E}[y_{t}^{\xi}|\mathcal{F}_{t}^{W^{0}}]-R_{t}^{-1}F_{t}\mathbb{E}[x_{t}^{\xi}|\mathcal{F}_{t}^{W^{0}}]\right)\right)\right] d t+\sigma d W_t+\sigma_{0}dW_{t}, \\
d y_t^{\xi}= & -\left[(A_{t}-B_{t}F_{t}R_{t}^{-1}) y_t^{\xi}+(Q_{t}-F_{t}^{2}R^{-1}_{t}) x_t^{\xi}+Q_{t} l\left(t,\mathbb{E}[x_{t}^{\xi}|\mathcal{F}_{t}^{W^{0}}]\right)\right.\\
&\quad + F_{t}q\left(t,\rho\left(t,-R_{t}^{-1} B_{t} \mathbb{E}[y_{t}^{\xi}|\mathcal{F}_{t}^{W^{0}}]-R_{t}^{-1}F_{t}\mathbb{E}[x_{t}^{\xi}|\mathcal{F}_{t}^{W^{0}}]\right)\right)\\
&\left.\quad -F_{t}h\left(t,\rho\left(t,-R_{t}^{-1} B_{t} \mathbb{E}[y_{t}^{\xi}|\mathcal{F}_{t}^{W^{0}}]-R_{t}^{-1}F_{t}\mathbb{E}[x_{t}^{\xi}|\mathcal{F}_{t}^{W^{0}}]\right)\right)\right] d t+z_t^{\xi} d W_t+z_{t}^{0,\xi}dW_{t}^{0}, \\
x_0^{\xi}= & \xi,\quad y_T^{\xi}=G\left(x_T^{\xi}+g(\mathbb{E}[x_{T}^{\xi}|\mathcal{F}^{W^{0}}_{T}])\right).
\end{aligned}\right.
\end{equation}

\begin{Theorem}\label{mf solution}
Under assumptions $(B1)-(B4)$, the conditional mean field FBSDE \eqref{consistency fbsde} has a unique solution $(x^{\xi},y^{\xi},z^{\xi},z^{0,\xi}) \in S_{\mathbb{F}}^{2}([0,T];\mathbb{R})\times S^{2}_{\mathbb{F}}([0,T];\mathbb{R})\times L^{2}_{\mathbb{F}}([0,T];\mathbb{R}^{d})\times L^{2}_{\mathbb{F}}([0,T];\mathbb{R}^{d})$. Moreover,  for any $\xi_{1},\xi_{2}\in L^{2}_{\mathcal {F}_{0}}$, \begin{equation}\label{uniform}
  \left|\mathbb{E}[y_{t}^{\xi_{1}}|\mathcal{F}_{t}^{W^{0}}]-\mathbb{E}[y_{t}^{\xi_{2}}|\mathcal{F}_{t}^{W^{0}}]\right|\leq C\left|\mathbb{E}[x_{t}^{\xi_{1}}|\mathcal{F}_{t}^{W^{0}}]-\mathbb{E}[x_{t}^{\xi_{2}}|\mathcal{F}_{t}^{W^{0}}]\right|,\quad \forall t \in [0,T],
\end{equation}
where C is a constant only depending on $K,T$. Furthermore, 
\begin{equation}
 \alpha_{t}^{*} = -R_{t}^{-1}B_{t}y_{t}^{\xi}-R_{t}^{-1}F_{t}x^{\xi}_{t}-h\left(t, \rho\left(t,-R_{t}^{-1} B_{t} \mathbb{E}[y_{t}^{\xi}|\mathcal{F}_{t}^{W^{0}}]-R_{t}^{-1}F_{t}\mathbb{E}[x_{t}^{\xi}|\mathcal{F}_{t}^{W^{0}}]\right)\right)   
\end{equation}
is an optimal control of problem \ref{problem}.
\end{Theorem}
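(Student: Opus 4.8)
The plan is to reduce Theorem \ref{mf solution} to the general conditional mean field FBSDE theory announced in the introduction and developed in Section \ref{section 3}, by verifying that \eqref{consistency fbsde} is a special case of \eqref{fbsde} (equivalently \eqref{conditional fbsde}) whose structural hypotheses are implied by (B1)--(B4). First I would set $b_1(s)=f_1(s)=A_s-B_sF_sR_s^{-1}$, $b_2(s)=-B_s^2R_s^{-1}$, $f_2(s)=-(A_s-B_sF_sR_s^{-1})$, and the terminal data $h_1=G$, $h_2(\cdot)=Gg(\cdot)$; the diffusion coefficients $\sigma_1,\sigma_2,\sigma_0$ are all zero and $\sigma(s)$ is the constant vector, while the common-noise diffusion is the constant $\sigma_0$ (so the degenerate-in-$W^0$ case is covered, which is exactly why the general theorem is stated without a non-degeneracy assumption). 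The nonlinear mean-field terms $b_0,f_0,\sigma_0$ in \eqref{fbsde} are built from the compositions $b(s,\rho(s,\cdot))$, $-B_sh(s,\rho(s,\cdot))$, $f(s,\cdot)$, $Q_sl(s,\cdot)$, $F_sq(s,\rho(s,\cdot))-F_sh(s,\rho(s,\cdot))$, where $\rho$ is the uniformly Lipschitz inverse produced by (B3) via the inverse function theorem, with $\rho'=1/(1+h')$ and $|\rho'|\le 1/\varepsilon_0$. Boundedness of $A,B,Q,R,F$ and of the first and second derivatives of $f,b,l,h,g$ (from (B1), (B2), (B4)), together with $|\rho'|\le 1/\varepsilon_0$, gives that all these composite functions are uniformly Lipschitz in the conditional-expectation arguments with a constant $K$ depending only on the data, and have the linear-growth bound required by the integrability hypothesis on $f(t,0),b(t,0),l(t,0),h(t,0)$.

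Next I would verify the monotonicity condition of the general theorem. This is the crux: the general FBSDE result requires a monotonicity condition on the coefficients (the "new type of monotonicity on the function of the expectation" advertised in the introduction), and one must check that (B4)(a) or (B4)(b) is precisely that condition transported through the change of variables $\mu=\rho(t,\cdot)$. Concretely, I would compute the derivative of the drift/terminal nonlinearities in the forward direction $\partial_x\big(f(t,x)\big)=f'$, $\partial_x\big(Q_tl(t,x)\big)=Q_tl'$, and in the "mixed" direction the derivative of $b(s,\rho(s,\cdot))$, $h(s,\rho(s,\cdot))$, $q(s,\rho(s,\cdot))$ with respect to their argument, each carrying a factor $\rho'=1/(1+h')$; after grouping the backward-terminal contribution $G(1+g')$, the running state contribution $Q(1+l')-F_t^2R_t^{-1}+(h'-q')F_t^2R_t^{-1}/(1+h')$, and the coupling contribution $(-B_tR_t^{-1})(B_t+b')/(1+h')$, one sees that (B4)(a)/(b) is exactly the sign pattern the general monotonicity condition demands. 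I would also note that (B1)'s condition $Q_t-F_t^2R_t^{-1}\ge 0$ is what makes the decoupled quadratic cost convex (so the stochastic maximum principle in Step 1 gives a genuine minimizer, via the standard convexity/completion-of-square argument) and is consistent with the monotonicity sign requirement.

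With these verifications in place, the existence and uniqueness of $(x^\xi,y^\xi,z^\xi,z^{0,\xi})$ in the stated spaces follows directly from the general conditional mean field FBSDE theorem of Section \ref{section 3}, and the uniform Lipschitz estimate \eqref{uniform} is the specialization of the "uniformly Lipschitz continuity property of the decoupling field" that the general theorem provides (the constant $C$ inherits dependence only on $K$ and $T$). Finally, to conclude that $\alpha^*_t=-R_t^{-1}B_ty_t^\xi-R_t^{-1}F_tx_t^\xi-h(t,\rho(t,\cdots))$ is optimal for Problem \ref{problem}, I would close the loop of Steps 1--2: given the solution of \eqref{consistency fbsde}, define $\mu_t,\nu_t$ by the consistency formulas, observe by construction of $\rho$ that $\mu_t+h(t,\mu_t)$ equals the required conditional expectation and $\nu_t=\mathbb{E}[x_t^\xi|\mathcal{F}_t^{W^0}]$, so that \eqref{consistency fbsde} coincides with the decoupled system \eqref{fiXed fbsde} for this particular $(\mu,\nu)$; then \eqref{optimal control} together with the sufficiency part of the stochastic maximum principle (using convexity from (B1)) shows $\alpha^*$ minimizes $J(\cdot\mid\mu,\nu)$, and since $(\mu,\nu)$ satisfies the consistency condition, $\alpha^*$ solves Problem \ref{problem}. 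The main obstacle I anticipate is the bookkeeping in the second paragraph: carefully matching the composite nonlinearities and their derivatives to the hypotheses of the general theorem so that (B4)(a)/(b) is seen to be sufficient, and making sure the degenerate common-noise diffusion $\sigma_0$ (constant, possibly with the $W^0$-integrand of $X$ genuinely present only through $\sigma_0$) is permitted by the general result.
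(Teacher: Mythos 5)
Your proposal follows essentially the same route as the paper: the paper's proof simply asserts that (B1)--(B4) imply (A1)--(A3), invokes Theorem \ref{global general theorem} for existence, uniqueness and the estimate \eqref{uniform}, and refers to the sufficiency part of the stochastic maximum principle for the optimality of $\alpha^{*}$ — exactly the reduction you carry out in more detail. One bookkeeping slip: matching \eqref{consistency fbsde} against \eqref{conditional fbsde} gives $f_{1}(t)=Q_{t}-F_{t}^{2}R_{t}^{-1}$ and $f_{2}(t)=A_{t}-B_{t}F_{t}R_{t}^{-1}$, not $f_{1}=A_{t}-B_{t}F_{t}R_{t}^{-1}$ and $f_{2}=-(A_{t}-B_{t}F_{t}R_{t}^{-1})$ as you wrote; with your identification the sign condition $f_{1}\geq 0$ in (A2) would not follow from (B1), whereas with the correct one it is exactly $Q_{t}-F_{t}^{2}R_{t}^{-1}\geq 0$, and your own grouping of the running-state contribution in the second paragraph already uses the correct $f_{1}$.
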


\begin{proof}
It is easy to verify under assumptions (B1)-(B4), the assumptions (A1)-(A3) hold. Thus it follows from Theorem  \ref{global general theorem} that conditional mean field FBSDE \eqref{consistency fbsde} admits a unique solution and \eqref{uniform} holds. The optimal control statement is a straightforward result of stochastic maximum principle and we refer to \cite{carmona2018probabilistic}.
\end{proof}
\subsection{Master equation for the extended MFGs with common noises}\label{section 2.4}
In this subsection, we will study the well-posedness of the following master equation corresponding to problem \ref{problem}:
\begin{equation}
\left\{\begin{aligned}
    &\partial_t V(t, x, \nu)-\frac{1}{2} B^2_{t} R^{-1}_{t}\left|\partial_x V(t, x, \nu)\right|^2+\frac{1}{2}F_{t}^{2}R^{-1}_{t}x^{2}+\partial_x V(t, x, \nu)\Big[A _{t}x+f(t,\nu) \\
&-B_{t} h\left(t,\rho\left(t,-R^{-1}_{t} B_{t} \mathbb{E}\left[\partial_x V(t, \xi, \nu)\right]-R^{-1}_{t}F_{t}\nu\right)\right)+b\left(t,\rho\left(-R^{-1}_{t} B _{t}\mathbb{E}\left[\partial_x V(t, \xi, \nu)\right]-R^{-1}_{t}F_{t}\nu\right)\right)\Big]\\
&+\partial_\nu V(t, x, \nu)\Big[(A-B_{t}F_{t}R_{t}^{-1}) \nu-B_{t}^2 R_{t}^{-1} \mathbb{E}\left[\partial_x V(t, \xi, \nu)\right]+f(t,\nu)\\
&-B _{t}h\left(t,\rho\left(t,-R_{t}^{-1} B _{t}\mathbb{E}\left[\partial_x V(t, \xi, \nu)\right]-R^{-1}_{t}F_{t}\nu\right)\right)+b\left(t,\rho\left(t,-R_{t}^{-1} B_{t} \mathbb{E}\left[\partial_x V(t, \xi, \nu)\right]-R^{-1}_{t}F_{t}\nu\right)\right)\Big] \\
&+\frac{1}{2} \partial_{x x} V(t, x, \nu)\left(\sigma^2+\sigma_0^2\right) +\frac{1}{2} \partial_{\nu \nu} V(t, x, \nu) \sigma_0^2+\partial_{x \nu} V(t, x, \nu) \sigma_0^2 +\frac{1}{2} Q_{t}(x+l(t,\nu))^2 \\
&+F_{t}x\Big[-F_{t}R_{t}^{-1}x-B_{t}R_{t}^{-1}\partial_{x}V(t,x,\nu)-h\left(t,\rho\left(t,-R^{-1}_{t} B_{t} \mathbb{E}\left[\partial_x V(t, \xi, \nu)\right]-R^{-1}_{t}F_{t}\nu\right)\right)\\
&+q\left(t,\rho\left(t,-R^{-1}_{t} B_{t} \mathbb{E}\left[\partial_x V(t, \xi, \nu)\right]-R^{-1}_{t}F_{t}\nu\right)\right)\Big]=0, \\
&V(T, x, \nu)=\frac{1}{2} G(x+g(\nu))^2
\end{aligned}\right.\label{MP master equation}
\end{equation}
where $\xi \in L_{\mathcal{F}_0}^2$ with $\mathbb{E}[\xi]=\nu$.
We shall analyze the well-posedness of the master equation via the following vectorial master equation
\begin{equation}
\left\{\begin{aligned}
  &\partial_t u(t, x, \nu)+\partial_x u(t, x, \nu)\Big[(A_{t}-B_{t}F_{t}R_{t}^{-1}) x-B_{t}^2 R_{t}^{-1} u(t, x, \nu)\\
&-B h\left(t,\rho\left(t,-R_{t}^{-1} B_{t} \mathbb{E}[u(t, \xi, \nu)]-R^{-1}_{t}F_{t}\nu\right)\right)+f(t,\nu)+b\left(t,\rho\left(t,-R_{t}^{-1} B_{t} \mathbb{E}[u(t, \xi, \nu)]-R^{-1}_{t}F_{t}\nu\right)\right)\Big]\\
&
+\partial_\nu u(t, x, \nu)\Big[(A_{t}-B_{t}F_{t}R_{t}^{-1}) \nu-B^2 _{t}R_{t}^{-1} \mathbb{E}[u(t, \xi, \nu)]\\
&-B _{t}h\left(t,\rho\left(t,-R_{t}^{-1} B_{t} \mathbb{E}[u(t, \xi, \nu)]-R^{-1}_{t}F_{t}\nu\right)\right)+f(t,\nu)+b\left(t,\rho\left(t,-R_{t}^{-1} B_{t} \mathbb{E}[u(t, \xi, \nu)]-R^{-1}_{t}F_{t}\nu\right)\right)\Big]\\
&+\frac{1}{2} \partial_{x x} u(t, x, \nu)\left(\sigma^2+\sigma_0^2\right)+\frac{1}{2} \partial_{\nu \nu} u(t, x, \nu) \sigma_0^2+\partial_{x \nu} u(t, x, \nu) \sigma_0^2 +(A_{t}-B_{t}F_{t}R_{t}^{-1}) u(t, x, \nu)\\
&+(Q_{t}-F^{2}_{t}R_{t}^{-1})x+Q_{t}l(t,\nu)
+F_{t}q\left(t,\rho\left(t,-R_{t}^{-1} B_{t} \mathbb{E}[u(t, \xi, \nu)]-R^{-1}_{t}F_{t}\nu\right)\right)\\
&-F_{t}h\left(t,\rho\left(t,-R_{t}^{-1} B_{t} \mathbb{E}[u(t, \xi, \nu)]-R^{-1}_{t}F_{t}\nu\right)\right)=0,\\
&u(T, x, \nu)=G(x+g(\nu)). \end{aligned}
\right.\label{master equation}
\end{equation}

Actually, the solution $u$ serves as the decoupling field of $\eqref{consistency fbsde}$. Noting that the extended MFGs master equation \eqref{master equation} is a special case of the master equation \eqref{general master equation} studied in section \ref{section 3.2}, the following Theorem is a straightforward consequence of Theorem \ref{master theorem} and the proof will be omitted.
\begin{Theorem}\label{u master classical}
 Let Assumptions $(B1)-(B4)$ hold, the vectorial master equation \eqref{master equation} admits a unique classical solution $u$ 
 with bounded $\partial_{x}u,\partial_{\nu}u, \partial_{\nu\nu}u$.
\end{Theorem}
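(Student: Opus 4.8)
The plan is to deduce Theorem \ref{u master classical} as a special case of the general master equation result, Theorem \ref{master theorem}, established in Section \ref{section 3.2} for the master equation \eqref{general master equation} associated with the conditional mean field FBSDE \eqref{conditional fbsde}. The argument has three parts: (i) exhibit \eqref{master equation} as an instance of \eqref{general master equation} via an explicit identification of coefficients; (ii) verify that Assumptions (B1)--(B4) imply the structural hypotheses (A1)--(A3) required by Theorem \ref{master theorem}; (iii) quote the conclusion, including the stated derivative bounds.

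First I would set up the dictionary. Matching \eqref{master equation} with \eqref{general master equation}, the linear coefficients are $b_1(t)=f_2(t)=A_t-B_tF_tR_t^{-1}$, $b_2(t)=-B_t^2R_t^{-1}$, $f_1(t)=Q_t-F_t^2R_t^{-1}$, the diffusion vectors are $\sigma,\sigma_0$ with $\sigma_1=\sigma_2=0$, and the nonlinear couplings $b_0,\sigma_0,f_0$ together with the terminal coupling are obtained by composing the primitive data $b,h,q,g,G$ with $\rho(t,\cdot)$ and the affine map $(p,\nu)\mapsto -R_t^{-1}B_tp-R_t^{-1}F_t\nu$. It then has to be checked that these composite nonlinearities inherit the properties demanded in (A1)--(A3): measurability and uniform Lipschitz continuity in their conditional-expectation arguments follow from (B2)--(B3) together with $|\rho'|=|1/(1+h')|\le 1/\varepsilon_0$; the $C^2$ regularity with bounded first and second derivatives follows from (B4), using that $h\in C^2$ with $|1+h'|\ge\varepsilon_0$ forces $\rho(t,\cdot)\in C^2$ with bounded derivatives.

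Second, I would check the monotonicity-type conditions — exactly the computation already invoked in the proof of Theorem \ref{mf solution}. After substituting $\rho$ and using the chain rule $\rho'=1/(1+h')$, the sign of the $x$-coupling in the generator is governed by $Q(1+l')-F_t^2R_t^{-1}+(h'-q')F_t^2R_t^{-1}/(1+h')$, the sign of the $y$-coupling in the drift by $(-B_tR_t^{-1})(B_t+b')/(1+h')$, and the sign of the terminal coupling by $G(1+g')$; thus either branch (a) or (b) of (B4) supplies the one-sided monotonicity needed in (A1)--(A3), while $R_t>0$, $G>0$ and $Q_t-F_t^2R_t^{-1}\ge 0$ from (B1) provide the remaining non-degeneracy and positivity requirements.

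Finally, with \eqref{master equation} placed inside the scope of Theorem \ref{master theorem}, that theorem directly yields a unique classical solution $u$, which coincides with the decoupling field of \eqref{consistency fbsde}, and the boundedness of $\partial_xu,\partial_\nu u,\partial_{\nu\nu}u$ is part of its conclusion — these bounds trace back to the uniform Lipschitz estimate \eqref{uniform} (which controls $\partial_xu$) together with the analogous estimates for the $\nu$-dependence of the decoupling field. I expect the only genuine work to be the bookkeeping in the first step: once the term-by-term correspondence with \eqref{general master equation} is in place and the composite nonlinearities are seen to inherit the Lipschitz, $C^2$, and monotonicity properties from the primitive data, the statement follows immediately, with no new analytic difficulty beyond what Theorem \ref{master theorem} already resolves.
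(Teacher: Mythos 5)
Your proposal is correct and follows exactly the route the paper takes: the paper states that \eqref{master equation} is a special case of \eqref{general master equation} and deduces the result from Theorem \ref{master theorem}, omitting the details you supply (the coefficient dictionary, the computation showing that the composite nonlinearities built from $\rho$ satisfy the sign conditions in (A2)--(A3), and the regularity of the composites). One small labeling slip: Theorem \ref{master theorem} requires (A1)--(A4), not just (A1)--(A3), but your discussion of the $C^2$ regularity and boundedness of the composed coefficients is precisely the verification of (A4), so nothing of substance is missing.
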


 Based on Theorem \ref{u master classical}, applying similar technique as in  \cite[Theorem 3.8]{li2023linear}, we obtain the following theorem.
\begin{Theorem}
    Let Assumptions $(B1)-(B4)$ hold. Then the master equation \eqref{MP master equation} admits a unique classical solution $V$ with bounded $\partial_{x x} V, \partial_{x \nu} V, \partial_{\nu \nu} V$.
\end{Theorem}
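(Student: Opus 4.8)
The plan is to build $V$ out of the vectorial decoupling field $u$ supplied by Theorem~\ref{u master classical} and then verify directly that it solves \eqref{MP master equation}, in the spirit of \cite[Theorem~3.8]{li2023linear}. The guiding principle, built into the stochastic maximum principle, is that the adjoint process is the $x$-gradient of the value function: along the solution of \eqref{consistency fbsde} one has $y_t^\xi=u\big(t,x_t^\xi,\mathbb{E}[x_t^\xi|\mathcal{F}_t^{W^0}]\big)$, and $u$ should coincide with $\partial_x V$. Accordingly I would set
\[
V(t,x,\nu):=v_0(t,\nu)+\int_0^x u(t,r,\nu)\,dr ,
\]
with $v_0$ to be determined, so that automatically $\partial_x V=u$, $\partial_{xx}V=\partial_x u$ and $\partial_{x\nu}V=\partial_\nu u$; the first two of the asserted bounds are then immediate from Theorem~\ref{u master classical} (and are consistent with the uniform Lipschitz estimate \eqref{uniform}). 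Note also that, since $u$ is affine in $x$ by the linear--quadratic structure of \eqref{consistency fbsde}, $\mathbb{E}[u(t,\xi,\nu)]=u(t,\nu,\nu)$ depends on $\xi$ only through $\nu$, so \eqref{MP master equation} is a genuine PDE on $[0,T]\times\mathbb{R}^2$.

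First I would insert this ansatz into \eqref{MP master equation}. Because the vectorial master equation \eqref{master equation} is, up to terms constant in $x$, the $x$-derivative of \eqref{MP master equation}, replacing $\partial_x V,\partial_{xx}V,\partial_{x\nu}V$ by $u,\partial_x u,\partial_\nu u$ and invoking that $u$ solves \eqref{master equation} collapses the bulk of \eqref{MP master equation}, leaving a linear parabolic Cauchy problem for $v_0$ on $[0,T]\times\mathbb{R}$ of the form
\[
\partial_t v_0(t,\nu)+\tfrac12\sigma_0^2\,\partial_{\nu\nu}v_0(t,\nu)+\mathfrak{b}(t,\nu)\,\partial_\nu v_0(t,\nu)+\mathfrak{h}(t,\nu)=0,\qquad v_0(T,\nu)=\tfrac12 G\,g(\nu)^2,
\]
where $\mathfrak{b}(t,\nu)$ is the $W^0$-drift of the conditional mean $\nu$ and $\mathfrak{h}$ is an explicit source built from $u$, its $x$- and $\nu$-derivatives and the data $f,b,l,h,q,g$. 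Since Theorem~\ref{u master classical} gives bounded $\partial_x u,\partial_\nu u,\partial_{\nu\nu}u$, Assumption (B4) gives $C^2$ data with bounded derivatives, and (B3)--(B4) give $\rho\in C^2$ with bounded derivatives, the coefficients and source are regular with controlled growth, so by Feynman--Kac / classical linear parabolic theory this problem has a unique solution $v_0\in C^{1,2}$ with bounded $\partial_\nu v_0$; differentiating the equation once more in $\nu$ and using the bounded second-order data controls $\partial_{\nu\nu}v_0$. Putting the pieces together, $V\in C^{1,2}$ solves \eqref{MP master equation} by construction, matches the terminal condition $V(T,x,\nu)=\tfrac12 G(x+g(\nu))^2$, and $\partial_{xx}V=\partial_x u$, $\partial_{x\nu}V=\partial_\nu u$, $\partial_{\nu\nu}V=\partial_{\nu\nu}v_0+\int_0^x\partial_{\nu\nu}u(t,r,\nu)\,dr$ inherit the asserted boundedness from the corresponding bounds on $u$ and $v_0$.

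For uniqueness I would reverse the construction: if $V$ is any classical solution of \eqref{MP master equation} with bounded second derivatives, then $\partial_x V$ is a classical solution of the vectorial master equation \eqref{master equation} with bounded $\partial_x(\partial_x V),\partial_\nu(\partial_x V),\partial_{\nu\nu}(\partial_x V)$, hence $\partial_x V=u$ by the uniqueness in Theorem~\ref{u master classical}; then $v_0(t,\nu):=V(t,0,\nu)$ solves the linear Cauchy problem above, which has a unique solution in the relevant class, so $V$ is determined. Equivalently, a verification argument identifies any such $V$ with the value of the control problem in Problem~\ref{problem} along the equilibrium flow, which is unique by Theorem~\ref{mf solution}. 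The step I expect to be the real work is the bookkeeping in the collapse: checking term by term that the quadratic Hamiltonian pieces $\tfrac12 B_t^2 R_t^{-1}|\partial_x V|^2$, $\tfrac12 F_t^2 R_t^{-1}x^2$, $\tfrac12 Q_t(x+l)^2$ and the cross term $F_t x(\cdots)$ in \eqref{MP master equation} differentiate in $x$ exactly into the corresponding terms of \eqref{master equation}, so that what remains is precisely the clean $v_0$-equation, together with the attendant parabolic regularity bootstrap for $v_0$. Once this is verified, everything else follows routinely from Theorem~\ref{u master classical}.
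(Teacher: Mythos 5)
Your proposal is correct and follows essentially the same route as the paper, which gives no details here and simply invokes the technique of \cite[Theorem~3.8]{li2023linear} together with Theorem~\ref{u master classical} --- namely the ansatz $V(t,x,\nu)=v_0(t,\nu)+\int_0^x u(t,r,\nu)\,dr$ so that $\partial_x V=u$, followed by solving the residual linear equation for $v_0$ with terminal datum $\tfrac12 G g(\nu)^2$. The only points to watch are that when $\sigma_0=0$ the $v_0$-equation is degenerate, so you must rely on the Feynman--Kac representation along the conditional-mean flow (whose smooth dependence on the initial point gives the $C^{1,2}$ regularity) rather than on uniformly parabolic theory, and that the uniqueness step ``differentiate \eqref{MP master equation} in $x$'' presupposes third-order regularity of $V$, so the verification-theorem alternative you mention is the safer route.
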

\begin{remark} \label{comparison remark}
  Let us summarize the main novelties compared with \cite{li2023linear} as follows:
\begin{itemize}
    \item In their model, there is no cross term of state and control in cost functional, that is $F_{t} = 0$ and the drift coefficient of state dynamics is $AX_{t}+B\alpha_{t}+f(\nu_{t})+b(\mu_{t})$, where $A,B$ are constants. By contrast, we consider a more general form of state dynamics and cost functional.
    \item They require the boundedness of the coefficient $(f,b,l,h,g)$ and the non-degeneracy of the common noise. We remove the boundedness and non-degeneracy assumptions by adding some monotonicity conditions.
\end{itemize}  
\end{remark}
\section{Solvability of a type of conditional mean field  FBSDEs and related master equations}\label{section 3}
\subsection{Conditional mean field FBSDEs}
Let $(\Omega, \mathcal{F}, \mathbb{F}, \mathbb{P})$ be a complete filtered probability space which can support two independent d-dimensional Brownian motions: $W$ and $W^0$.  We let $\mathbb{F}:=\left\{\mathcal{F}_t\right\}_{t \in[0, T]}$, where $\mathcal{F}_t:=\mathcal{F}_t^W \vee \mathcal{F}_t^{W^0} \vee \mathcal{F}_0$, and let $\mathbb{P}$ have no atom in $\mathcal{F}_0$ so it can support any measure on $\mathbb{R}$ with a finite second moment. For simplicity, We denote $\mathbb{F}^0:=\left\{\mathcal{F}_t^{W^0}\right\}_{t \in[0, T]}$ and $\mathbb{F}^{1}:= \left\{\mathcal{F}^{W}_{t}\right\}_{t\in[0,T]}$. In this subsection, we prove existence and uniqueness of solution for  conditional mean field FBSDE
\begin{equation}\label{conditional fbsde}
    \left\{
    \begin{aligned}
dX_{t} &= \xi+\int_{0}^{t}\Big[b_{1}(s)X_{s}+b_{2}(s)Y_{s}+b_{0}\left(s,E[X_{s}|\mathcal{F}^{W^{0}}_{s}],E[X_{s}|\mathcal{F}^{W^{0}}_{s}]\right)\Big]ds+\int_{0}^{t}\sigma(s)dW_{s}\\
&\quad +\int_{0}^{t}\Big[\sigma_{1}(s)X_{s}+ Y_{s}^{\mathsf{T}}\sigma_{2}(s)+\sigma_{0}\left
(s,E[X_{s}|\mathcal{F}^{W^{0}}_{s}],E[Y_{s}|\mathcal{F}^{W^{0}}_{s}]\right) \Big] dW_{s}^{0},\\
dY_{t} &=h_{1}X_{T}+h_{2}(\mathbb{E}[X_{T}|\mathcal{F}_{T}^{W^{0}}])+\int_{t}^{T} \Big[f_{1}(s)X_{s}+f_{2}(s)Y_{s}+f_{0}\left(s,E[X_{s}|\mathcal{F}^{W^{0}}_{s}],E[X_{s}|\mathcal{F}^{W^{0}}_{s}]\right)\Big]ds\\
&\quad -\int_{t}^{T}Z_{s}dW_{s}-\int_{t}^{T}Z^{0}_{s}dW_{s}^{0},
\end{aligned}\right.
\end{equation}
where $W$ and $W^{0}$ are two independent d-dimensional Brownian motion and $\xi \in L^{2}_{\mathcal{F}_{0}}$. We assume the system's coefficients satisfy the following assumptions and let $K$ be a positive constant.
\begin{itemize}
\item[\textbf{(A1)}]
(i) The mappings $b_{1}: [0, T] \rightarrow  \mathbb{R}$, $b_{2},f_{1}: [0, T] \rightarrow  \mathbb{R}^{n}$, $f_{2}: [0, T] \rightarrow  \mathbb{R}^{n\times n}, \sigma_{1}:[0,T]\rightarrow \mathbb{R}^{d}, \sigma_{2}: [0,T]\rightarrow \mathbb{R}^{n\times d}$ are measurable and bounded by $K$, the mapping $\sigma:\Omega \times [0,T] \rightarrow \mathbb{R}^{d}$ is $\mathcal{F}_{t}^{W}$-progressively measurable.\\
(ii) The mappings $ b_{0}(t,\bar{x},\bar{y}):\Omega\times[0,T]\times \mathbb{R} \times \mathbb{R}^{n}\rightarrow \mathbb{R}$, $f_{0}(t,\bar{x},\bar{y}):\Omega\times[0,T]\times \mathbb{R} \times \mathbb{R}^{n}\rightarrow \mathbb{R}^{n}$ and $\sigma_{0}(t,\bar{x},\bar{y}):\Omega \times [0,T] \times \mathbb{R} \times \mathbb{R}^{n} \rightarrow \mathbb{R}^{d} $ are $\mathcal{F}^{W^{0}}_{t}$-progressively measurable and uniformly Lipschitz continuous with respect to $\bar{x}$ and $\bar{y}$, i.e., 
 \begin{equation*}
      \begin{aligned}
 \left|b_{0}(t,\bar{x}, \bar{y})-b_{0}\left(t,\bar{x}^{\prime},\bar{y}^{\prime}\right)\right| & \leq K\left(\left|\bar{x}-\bar{x}^{\prime}\right|+ \left|\bar{y}-\bar{y}^{\prime}\right|\right),\\
 \left|f_{0}(t,\bar{x}, \bar{y})-f_{0}\left(t,\bar{x}^{\prime},\bar{y}^{\prime}\right)\right| & \leq K\left(\left|\bar{x}-\bar{x}^{\prime}\right|+ \left|\bar{y}-\bar{y}^{\prime}\right|\right),\\
 \left|\sigma_{0}(t,\bar{x}, \bar{y})-\sigma_{0}\left(t,\bar{x}^{\prime},\bar{y}^{\prime}\right)\right| & \leq K\left(\left|\bar{x}-\bar{x}^{\prime}\right|+ \left|\bar{y}-\bar{y}^{\prime}\right|\right).
\end{aligned}
    \end{equation*}
    Moreover,  $b_{0}(t,0,0)\in L^{2}_{\mathbb{F}^{0}}([0,T];\mathbb{R}),f_{0}(t,0,0)\in L_{\mathbb{F}^{0}}^{2}([0,T];\mathbb{R}^{n}),\sigma_{0}(t,0,0)\in L^{2}_{\mathbb{F}^{0}}([0,T];\mathbb{R}^{d}),\sigma(t)\in L^{2}_{\mathbb{F}^{1}}([0,T];\mathbb{R}^{d})$.\\
(iii) $h_{1}\in\mathbb{R}$ is bounded by $K$ and $h_{2}:\Omega \times \mathbb{R}\rightarrow \mathbb{R}^{n}$ is $\mathcal{F}_{T}^{W^0}$-measurable and uniformly Lipschitz continuous, i.e., 
    \begin{equation*}
        \left| h_{2}(\bar{x})-h_{2}(\bar{x}^{\prime}) \right|\leqslant K \left|\bar{x}-\bar{x}^{\prime} \right|.
    \end{equation*}
Moreover, $h_{2}(0) \in L^{2}_{\mathcal{F}_{T}^{W^{0}}}$.
 \end{itemize}
 
  Next, we give the following definitions and notations. For ease of notations,  for $y_{1},y_{2}\in \mathbb{R}^{n}$, we denote
\begin{equation*}
(y_{1}^{(1,i)},y_{2}^{(i+1,n)}):=(y_{1}^{1},y_{1}^{2},\cdots,y_{1}^{i},y_{2}^{i+1},\cdots,y_{2}^{n}).
\end{equation*}
 For $\left(\bar{x}_{1}, \bar{y}_{1}\right), \left(\bar{x}_{2}, \bar{y}_{2}\right)\in \mathbb{R} \times \mathbb{R}^{n}$, let $\theta_{1}:= (\bar{x}_{1},\bar{y}_{1}), \theta_{2}:= (\bar{x}_{2},\bar{y}_{2})$, and for $i,j=1,2,\cdots,n$, denote
\begin{equation}
	\begin{aligned}
		h_{2}^{i}(\bar{x}_{1},\bar{x}_{2}) &\triangleq \frac{h^{i}_{2}(\bar{x}_{1})-h^{i}_{2}(\bar{x}_{2})}{\bar{x}_{1}-\bar{x}_{2}},\\
	b_{3}(t,\theta_{1},\theta_{2})&\triangleq \frac{b_{0}(t,\bar{x}_{1},\bar{y}_{1})-b_{0}(t,\bar{x}_{2},\bar{y}_{1})}{\bar{x}_{1}-\bar{x}_{2}},\\
		f_{3}^{i}(t,\theta_{1},\theta_{2}) &\triangleq \frac{f^{i}_{0}(t,\bar{x}_{1},\bar{y}_{1})-f^{i}(t,\bar{x}_{2},\bar{y}_{1})}{\bar{x}_{1}-\bar{x}_{2}},\\
	b_{4}^{j}(t,\theta_{1},\theta_{2}) &\triangleq \frac{b_{0}(t,\bar{x}_{2},\bar{y}_{2}^{(1,j-1)},\bar{y}_{1}^{(j,n)})-b_{0}(t,\bar{x}_{2},\bar{y}_{2}^{(1,j)},\bar{y}_{1}^{(j+1,n)})}{\bar{y}_{1}^{j}-\bar{y}_{2}^{j}},\\
	f_{4}^{ij}(t,\theta_{1},\theta_{2})&\triangleq \frac{f^{i}_{0}(t,\bar{x}_{2},\bar{y}_{2}^{(1,j-1)},\bar{y}_{1}^{(j,n)})-f^{i}_{0}(t,\bar{x}_{2},\bar{y}_{2}^{(1,j)},\bar{y}_{1}^{(j+1,n)})}{\bar{y}_{1}^{j}-\bar{y}_{2}^{j}},
		\end{aligned} \label{quadratic notation}
\end{equation}
and $b_{4}(t,\theta_{1},\theta_{2}) = (b_{4}^{1},b_{4}^{2},\cdots,b_{4}^{n})(t,\theta_{1},\theta_{2})$, $f_{4}^{i}(t,\theta_{1},\theta_{2}) = (f_{j}^{i1},f_{j}^{i2},\cdots,f_{j}^{in})(t,\theta_{1},\theta_{2})$.

Using these notations, we introduce the following monotonicity conditions.
\begin{itemize}
\item[\textbf{(A2)}]
 For $1\leq i,j \leq n$, $j\neq i$ and  $t\in[0,T] $, one of the following two cases holds:
    \begin{itemize}
        \item[(i)] 
\begin{equation}
  f_{1}(t) \geq 0, \quad h_{1}\geq 0,\quad b_{2}(t)\leq 0 \text{ and } 
    f_{2}^{ij}(t)\geq 0.
\end{equation} 
\item[(ii)] 
\begin{equation}
  f_{1}(t) \leq 0, \quad h_{1}\leq 0,\quad b_{2}(t)\geq 0 \text{ and } 
    f_{2}^{ij}(t)\geq 0.
\end{equation} 
\end{itemize}
\item[\textbf{(A3)}]
For $1\leq i,j \leq n$, $i\neq j$, $t\in[0,T] $ and  any $\theta_{1}=(\bar{x}_{1},\bar{y}_{1}),\theta_{2}=(\bar{x}_{2},\bar{y}_{2}) \in \mathbb{R}\times \mathbb{R}^{n}$, one of the following two cases holds:
\begin{itemize}
    \item [(i)] 
$f_{1}(t)+f_{3}(t,\theta_{1},\theta_{2}) \geq 0, \ h_{1}+h_{2}(\bar{x}_{1},\bar{x}_{2})\geq 0,\ b_{2}(t)+b_{4}(t,\theta_{1},\theta_{2})\leq 0$ , $f_{2}^{ij}(t)+f_{4}^{ij}(t,\theta_{1},\theta_{2})\geq 0$.
\item [(ii)]
$f_{1}(t)+f_{3}(t,\theta_{1},\theta_{2}) \leq 0, \ h_{1}+h_{2}(\bar{x}_{1},\bar{x}_{2})\leq 0,\ b_{2}(t)+b_{4}(t,\theta_{1},\theta_{2})\geq 0$ , $f_{2}^{ij}(t)+f_{4}^{ij}(t,\theta_{1},\theta_{2})\geq 0$.
\end{itemize}
\end{itemize}

The following theorem is the main result of this subsection, which gives the existence and uniqueness of solution for conditional mean field FBSDE \eqref{conditional fbsde}.
\begin{Theorem}\label{global general theorem}
 Under assumptions $ (A1)-(A3)$, the conditional mean field FBSDE \eqref{conditional fbsde} admits a unique solution $(X,Y,Z,Z^{0}) \in S^{2}_{\mathbb{F}}([0,T];\mathbb{R}^{d})\times S^{2}_{\mathbb{F}}([0,T];\mathbb{R}^{d})\times L^{2}_{\mathbb{F}}([0,T];\mathbb{R}^{n\times d})\times L^{2}_{\mathbb{F}}([0,T];\mathbb{R}^{n\times d})$.
\end{Theorem}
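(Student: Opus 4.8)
The plan is to follow the two-step scheme announced in the introduction, which decouples the mean-field interaction from the pathwise dynamics by conditioning on the common noise $W^0$, and to bring each of the two resulting \emph{classical} FBSDEs within the scope of the monotonicity-type well-posedness theory of \cite{hua2022unified}.

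\textbf{Step 1 (frozen conditional expectations).} First I would treat the conditional-expectation terms in \eqref{conditional fbsde} as exogenous inputs: fix $(\bar m,\bar n)\in L^2_{\mathbb F^0}([0,T];\mathbb R)\times L^2_{\mathbb F^0}([0,T];\mathbb R^n)$ and $\bar\eta\in L^2_{\mathcal F^{W^0}_T}$ and substitute them for $E[X_s\mid\mathcal F^{W^0}_s]$, $E[Y_s\mid\mathcal F^{W^0}_s]$ and $E[X_T\mid\mathcal F^{W^0}_T]$. This produces a genuinely classical FBSDE, affine in $(X,Y,Z,Z^0)$ up to the frozen adapted source terms $b_0(s,\bar m_s,\bar m_s)$, $\sigma_0(s,\bar m_s,\bar n_s)$, $f_0(s,\bar m_s,\bar m_s)$, $h_2(\bar\eta)$, whose structural sign pattern on $f_1,h_1,b_2$ and the off-diagonal entries of $f_2$ is exactly $(A2)$. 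Invoking the well-posedness result of \cite{hua2022unified} (which requires neither non-degeneracy of the diffusion nor deterministic coefficients) gives a unique solution $(X,Y,Z,Z^0)=:\Phi(\bar m,\bar n,\bar\eta)$ in the stated spaces, together with Lipschitz dependence on $(\bar m,\bar n,\bar\eta,\xi)$.

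\textbf{Step 2 (the fixed-point FBSDE).} Next I would derive the equation for the conditional expectations themselves. Taking $E[\,\cdot\mid\mathcal F^{W^0}_t]$ in \eqref{conditional fbsde} and using independence of $W$ and $W^0$ — so that $E[\int_0^t\sigma(s)\,dW_s\mid\mathcal F^{W^0}_t]=0$, $E[\int_0^t\phi_s\,dW^0_s\mid\mathcal F^{W^0}_t]=\int_0^tE[\phi_s\mid\mathcal F^{W^0}_s]\,dW^0_s$, and the $Z\,dW$-term in the backward equation vanishes — one finds that $(\widehat X,\widehat Y,\widehat Z^0):=(E[X_\cdot\mid\mathcal F^{W^0}_\cdot],E[Y_\cdot\mid\mathcal F^{W^0}_\cdot],E[Z^0_\cdot\mid\mathcal F^{W^0}_\cdot])$ must solve the $\mathbb F^0$-adapted classical FBSDE
\begin{equation*}
\left\{
\begin{aligned}
d\widehat X_t&=\big[b_1(t)\widehat X_t+b_2(t)\widehat Y_t+b_0(t,\widehat X_t,\widehat X_t)\big]\,dt+\big[\sigma_1(t)\widehat X_t+\widehat Y_t^{\mathsf{T}}\sigma_2(t)+\sigma_0(t,\widehat X_t,\widehat Y_t)\big]\,dW^0_t,\\
d\widehat Y_t&=-\big[f_1(t)\widehat X_t+f_2(t)\widehat Y_t+f_0(t,\widehat X_t,\widehat X_t)\big]\,dt-\widehat Z^0_t\,dW^0_t,\\
\widehat X_0&=E[\xi\mid\mathcal F^{W^0}_0],\qquad \widehat Y_T=h_1\widehat X_T+h_2(\widehat X_T).
\end{aligned}\right.
\end{equation*}
This FBSDE is nonlinear in $(\widehat X,\widehat Y)$ through $b_0,\sigma_0,f_0,h_2$; when one writes the difference of two solutions and checks the monotonicity hypothesis of \cite{hua2022unified}, the linearization is carried by the difference-quotient coefficients $b_3,b_4,f_3,f_4,h_2^i$ of \eqref{quadratic notation} (well defined thanks to the paper's convention $\tfrac{\phi(x)-\phi(x)}{x-x}:=0$ and bounded by $K$ by the Lipschitz part of $(A1)$), and the required sign conditions are precisely $(A3)$. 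Hence \cite{hua2022unified} again yields a unique $\mathbb F^0$-solution $(\widehat X,\widehat Y,\widehat Z^0)$.

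\textbf{Step 3 (closing the loop; uniqueness).} Finally I would set $\bar m_s:=\widehat X_s$, $\bar n_s:=\widehat Y_s$, $\bar\eta:=\widehat X_T$ and define $(X,Y,Z,Z^0):=\Phi(\widehat X,\widehat Y,\widehat X_T)$ from Step 1. Taking $E[\,\cdot\mid\mathcal F^{W^0}_\cdot]$ of this triple produces a solution of the Step-2 FBSDE, which by uniqueness must equal $(\widehat X,\widehat Y,\widehat Z^0)$; hence the consistency relations $E[X_s\mid\mathcal F^{W^0}_s]=\bar m_s$, etc., hold and $(X,Y,Z,Z^0)$ solves \eqref{conditional fbsde}. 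Conversely, for any solution of \eqref{conditional fbsde} its conditional expectations solve the Step-2 FBSDE, hence coincide with $(\widehat X,\widehat Y,\widehat Z^0)$, and then the solution itself solves the Step-1 FBSDE with data $(\widehat X,\widehat Y,\widehat X_T)$, hence equals $\Phi(\widehat X,\widehat Y,\widehat X_T)$; this gives uniqueness. I expect the main obstacle to be not the conditioning bookkeeping but the verification that $(A1)$–$(A3)$ are exactly the structural hypotheses under which \cite{hua2022unified} applies to \emph{both} classical FBSDEs at once — in particular handling the $n$-dimensional backward component, where the $i\neq j$ conditions on $f_2^{ij}$ and on $f_2^{ij}+f_4^{ij}$ are what makes the relevant monotonicity functional work coordinatewise, so that the difference-quotient coefficients of \eqref{quadratic notation} slot correctly into it without any non-degeneracy or boundedness beyond that assumed in $(A1)$.
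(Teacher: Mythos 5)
Your proposal is correct and follows essentially the same route as the paper: freeze the conditional expectations and solve the resulting classical FBSDE under (A2), derive the $\mathbb F^0$-adapted FBSDE satisfied by $(\mathbb E[X_\cdot\mid\mathcal F^{W^0}_\cdot],\mathbb E[Y_\cdot\mid\mathcal F^{W^0}_\cdot])$ and solve it under (A3) via \cite{hua2022unified}, then close the fixed point. The only cosmetic difference is that the paper handles the frozen problem constructively through the Riccati equation \eqref{p equation} and the representation $Y_t=P_tX_t+\varphi_t$ (which it reuses later for the master equation, and which also justifies $\mathbb E[\int_0^tZ_s\,dW_s\mid\mathcal F^{W^0}_t]=0$ explicitly via $Z_t=P_t\sigma(t)$), whereas you invoke \cite{hua2022unified} directly.
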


The strategy we use below is to recast the stochastic system \eqref{conditional fbsde} into a well-posed fixed point problem over conditional expectations. The first step is to use $(\mathbb{E}[X_{t}|\mathcal{F}^{W^{0}}_{t}],\mathbb{E}[Y_{t}|\mathcal{F}_{t}^{W^{0}}])_{0\leq t\leq T}$ as an input and then solve \eqref{conditional fbsde} as a classical FBSDE. Therefore, for any given $(\mu,\nu) \in L^{2}_{\mathbb{F}^{0}}([0,T];\mathbb{R}) \times L^{2}_{\mathbb{F}^{0}}([0,T];\mathbb{R}^{n})$, we first consider the following FBSDE : 
\begin{equation}
\left\{
\begin{aligned}
     dX_{t}^{\xi,\mu,\nu}&=\xi+\int_{0}^{t} \Big[b_{1}(s)X_{s}^{\xi,\mu,\nu}+b_{2}(s)Y_{s}^{\xi,\mu,\nu}+b_{0}(s,\mu_{s},\nu_{s})\Big]dt+\int_{0}^{t}\sigma (s)dW_{s}\\
     &\quad +\int_{0}^{t}\Big[\sigma_{1}(s)X_{s}^{\xi,\mu,\nu}+(Y_{s}^{\xi,\mu,\nu})^{\mathsf{T}}\sigma_{2}(s)+\sigma_{0}(s,\mu_{s},\nu_{s}) \Big]dW_{s}^{0},\\
dY_{t}^{\xi,\mu,\nu} &=h_{1}X_{T}^{\xi,\mu,\nu}+h_{2}(\mu_{T})+\int_{t}^{T} \Big[f_{1}(s)X_{s}^{\xi,\mu,\nu}+f_{2}(s)Y_{s}^{\xi,\mu,\nu}+f_{0}(s,\mu_{s},\nu_{s})\Big]ds\\
&\quad -\int_{t}^{T}Z_{s}^{\xi,\mu,\nu}dW_{s}-\int_{t}^{T}Z_{s}^{0,\xi,\mu,\nu}dW_{s}^{0}.
    \end{aligned}\right. \label{general fixed fbsde}
\end{equation}

Now, we would like to show the well-posedness of FBSDE \eqref{general fixed fbsde} on $[0,T]$. Further, we shall give a representation result of the solution of FBSDE \eqref{general fixed fbsde}. To this end, we introduce the following Riccati equation
 \begin{equation}
     \left\{
     \begin{aligned}
         &dP_{t}=-(b_{2}(t) P_{t})P_{t}-f_{2}(t) P_{t}-b_{1}(t)P_{t}-f_{1}(t),\\
         &P_{T} = h_{1}.
     \end{aligned}
     \right.\label{p equation}
 \end{equation}
 For the solvability of above Riccati equation, we recall the following Lemma from \cite[Lemma 4.1]{hua2023well}.
\begin{lemma}\label{p lemma}
Under assumptions $(A1)$ and $ (A2)$, the Riccati equation \eqref{p equation}  admits a unique solution on $[0,T]$ such that for any $t\in[0,T]$,
 \begin{equation*}
    \left|P_{t}\right| \leq C,
 \end{equation*}
 where $C$ is a constant depending only on $n,K,T$.
\end{lemma}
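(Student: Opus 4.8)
The plan is to treat \eqref{p equation} as a terminal-value problem for a finite-dimensional Riccati-type ODE and to prove global solvability by the classical scheme ``local existence plus a priori bound''. First I would note that the right-hand side of \eqref{p equation}, the map $P\mapsto-(b_{2}(t)P)P-f_{2}(t)P-b_{1}(t)P-f_{1}(t)$, is locally Lipschitz in $P$ and, by (A1), measurable and bounded in $t$; hence Picard--Lindel\"{o}f gives a unique maximal solution $P$ on some interval $(\tau,T]$, and the only obstruction to reaching $t=0$ is finite-time blow-up as $t\downarrow\tau$. Everything therefore reduces to the bound $|P_{t}|\le C(n,K,T)$ on the maximal interval; uniqueness on all of $[0,T]$ then follows from local uniqueness together with the usual continuation argument.

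The a priori bound is obtained in two steps, and this is where the monotonicity condition (A2) is used. Working in case (i) of (A2) (case (ii) is symmetric after the substitution $P\mapsto-P$), I would first show that the closed orthant $\mathcal{O}:=\{p\in\mathbb{R}^{n}:p^{i}\ge0,\ 1\le i\le n\}$ is invariant for the backward flow of \eqref{p equation}. Since $P_{T}=h_{1}\ge0$ lies in $\mathcal{O}$, it suffices to check that the vector field points inward along each face $\{p\in\mathcal{O}:p^{i}=0\}$; there one has
\[
\dot P^{i}_{t}=-\big(b_{2}(t)P_{t}\big)P^{i}_{t}-\sum_{j\ne i}f_{2}^{ij}(t)P^{j}_{t}-b_{1}(t)P^{i}_{t}-f_{1}^{i}(t)=-\sum_{j\ne i}f_{2}^{ij}(t)P^{j}_{t}-f_{1}^{i}(t)\le0,
\]
using $f_{2}^{ij}(t)\ge0$ for $j\ne i$, $P^{j}_{t}\ge0$ and $f_{1}^{i}(t)\ge0$ from (A2)(i); a standard invariance/comparison argument then keeps $P_{t}\in\mathcal{O}$ throughout the maximal interval.

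Second, once $P_{t}\in\mathcal{O}$ is in force, the Riccati quadratic term acquires the favourable sign: since $b_{2}(t)\le0$ and $P_{t}\ge0$ we get $b_{2}(t)P_{t}\le0$, hence $-\big(b_{2}(t)P_{t}\big)P^{i}_{t}\ge0$. Setting $V_{t}:=\sum_{i}P^{i}_{t}\ge0$ (and noting $|P_{t}|\le V_{t}$ on $\mathcal{O}$) and summing \eqref{p equation} over $i$ gives $\dot V_{t}=-\big(b_{2}(t)P_{t}\big)V_{t}+r_{t}$ with $|r_{t}|\le C(n,K)(1+V_{t})$ by (A1); because $-\big(b_{2}(t)P_{t}\big)V_{t}\ge0$ this yields the \emph{one-sided} linear differential inequality $\dot V_{t}\ge-C(n,K)(1+V_{t})$. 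Reading this in reversed time and applying Gronwall's inequality with $V_{T}\le nK$ produces $0\le P^{i}_{t}\le V_{t}\le C(n,K,T)$, which is the claimed estimate and, with the reduction above, completes the proof.

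The step I expect to be the genuine obstacle is the interplay between these two steps: one must verify not merely that (A2) confines $P$ to an orthant, but that \emph{inside that same orthant} the sign of $-(b_{2}P)P$ is the one that enters the Gronwall estimate harmlessly rather than the one that would drive $V$ to $+\infty$ in finite backward time. Matching the four sign requirements in (A2) --- on $f_{1}$, $h_{1}$, $b_{2}$ and the off-diagonal entries $f_{2}^{ij}$ --- to ``orthant invariance plus a benign quadratic term'' is the crux; once that is secured, elementary ODE theory, the invariance lemma, and a scalar Gronwall inequality assemble into the result.
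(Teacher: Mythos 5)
Your proposal is correct, and it is a complete, self-contained argument. Note that the paper itself does not prove this lemma: it simply imports it from \cite[Lemma 4.1]{hua2023well}, so there is no in-text proof to match against. Your route --- Carath\'eodory/Picard--Lindel\"of local existence, backward invariance of the nonnegative orthant via the Nagumo-type boundary check $\dot P^i_t\le 0$ on $\{P^i=0\}$ (which uses exactly $f_1\ge 0$, $h_1\ge 0$, $f_2^{ij}\ge 0$ for $j\ne i$), the sign observation $-(b_2(t)P_t)P^i_t\ge 0$ on the orthant from $b_2\le 0$, and then a one-sided Gronwall bound on $V_t=\sum_i P^i_t$ --- is the natural direct ODE proof, and the reduction of case (ii) to case (i) by $P\mapsto -P$ checks out ($b_2,f_1,h_1$ flip sign while $b_1,f_2$ are unchanged). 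Two cosmetic points you may want to tighten when writing this up: the first displayed identity on the face should formally include the $f_2^{ii}(t)P^i_t$ term before setting $P^i_t=0$ (it vanishes there, so nothing changes); and you should state explicitly that the a priori bound on the maximal interval, combined with local Lipschitz continuity of the quadratic field, rules out blow-up and hence extends the solution to all of $[0,T]$, which you do gesture at. The cited reference obtains the same bound within the FBSDE framework (the sign conditions there force monotonicity and a uniform Lipschitz bound of the decoupling field $x\mapsto P_t x$), so the underlying mechanism is identical; your version has the advantage of being purely elementary ODE theory.
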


Next, we introduce the following backward stochastic differential equation (BSDE)
 \begin{equation}\label{BSDE}
 \left\{
 \begin{aligned}
    d\varphi_{t} ^{\xi,\mu,\nu}&=-\left[ f_{2}(t)\varphi_{t}^{\xi,\mu,\nu}+(b_{2}(t)\varphi_{t}^{\xi,\mu,\nu})P_{t} +f_{0}(t,\mu_{t},\nu_{t})+P_{t}b_{0}(t,\mu_{t},\nu_{t})\right] dt +\Gamma_{t}^{\xi,\mu,\nu}dW_{t}^{0},   \\
    \varphi_{T}^{\xi,\mu,\nu} &= h_{2}(\mu_{T}). 
 \end{aligned}
  \right.
 \end{equation}
\begin{lemma}
Suppose assumptions $(A1)$ and $  (A2)$ hold. Let $\xi\in L^{2}_{\mathcal{F}_{0}}$ and $(\mu,\nu) \in L^{2}_{\mathbb{F}^{0}}([0,T];\mathbb{R}) \times L^{2}_{\mathbb{F}^{0}}([0,T];\mathbb{R}^n)$.\\
\begin{itemize}
    \item [(i)]The BSDE \eqref{BSDE} admits a unique solution $(\varphi_{t}^{\xi,\mu,\nu}, \Gamma_{t}^{\xi,\mu,\nu})$ on $[0,T]$.
    \item[(ii)] Given the following stochastic differential equation (SDE) 
\begin{equation}
\left\{
    \begin{aligned}
      dX_{t}^{\xi,\mu,\nu}&=\left[ b_{1}X_{t}^{\xi,\mu,\nu}+b_{2}(P_{t}X_{t}^{\xi,\mu,\nu}+\varphi_{t}^{\xi,\mu,\nu})+b_{0}(t,\mu_{t},\nu_{t})\right]dt+\sigma(t)dW_{t}\\
      &\quad +\Big[\sigma_{1}(t)X_{t}^{\xi,\mu,\nu}+(P_t X_t^{\xi,\mu,\nu}+\varphi_t^{\xi,\mu,\nu})^{\mathsf{T}}\sigma_{2}(t)+\sigma_{0}(t,\mu_{t},\nu_{t}) \Big]dW_{t}^{0},\\
      X_{0}&=\xi.
\end{aligned}\right.\label{SDE}
\end{equation}
SDE \eqref{SDE} is well-posed on $[0,T]$.
\item[(iii)] Given $X_{t}^{\xi,\mu,\nu}$ in (ii), the BSDE of \eqref{general fixed fbsde} admits a unique solution$ (Y_{t}^{\xi,\mu,\nu},Z_{t}^{\xi,\mu,\nu},Z_{t}^{0,\xi})$ on $[0,T]$ where
  $Y_t^{\xi,\mu,\nu}=P_t X_t^{\xi,\mu,\nu}+\varphi_t^{\xi,\mu,\nu}, Z_t^{\xi,\mu,\nu}=P_t\sigma(t)$ and $Z_t^{0,\xi,\mu,\nu}=P_t \big[\sigma_{1}(t)X_{t}^{\xi,\mu,\nu}+(P_t X_t^{\xi,\mu,\nu}+\varphi_t^{\xi,\mu,\nu})^{\mathsf{T}}\sigma_{2}(t)+\sigma^{0}(t,\mu_{t},\nu_{t}) \big]+\Gamma_t^{\xi,\mu,\nu}$. Therefore, $\left(X^{\xi,\mu,\nu}, Y^{\xi,\mu,\nu}, Z^{\xi,\mu,\nu}, Z^{0,\xi,\mu,\nu}\right)$ is the unique solution to FBSDE \eqref{general fixed fbsde}.
\end{itemize}\label{fix representation}
\end{lemma}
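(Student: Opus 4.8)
\textbf{Proof proposal for Lemma \ref{fix representation}.}

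The plan is to treat the three decoupling ingredients one at a time, solving them in the order forced by the dependency structure: first the Riccati equation $P$ (already handled by Lemma \ref{p lemma}), then the linear BSDE \eqref{BSDE} for $(\varphi,\Gamma)$, then the forward SDE \eqref{SDE} for $X^{\xi,\mu,\nu}$, and finally a verification step showing that the ansatz $Y=PX+\varphi$, $Z=P\sigma$, $Z^0=P[\cdots]+\Gamma$ solves the backward part of \eqref{general fixed fbsde}. For part (i), I would observe that since $P$ is bounded on $[0,T]$ by Lemma \ref{p lemma}, the driver of \eqref{BSDE} is affine in $\varphi$ with bounded (measurable) coefficients $f_2(t)+(\,\cdot\,)P_t$, and the free terms $f_0(t,\mu_t,\nu_t)+P_t b_0(t,\mu_t,\nu_t)$ are in $L^2_{\mathbb{F}^0}([0,T];\mathbb{R}^n)$ by (A1)(ii) together with the Lipschitz bounds and $(\mu,\nu)\in L^2$; likewise the terminal value $h_2(\mu_T)\in L^2_{\mathcal{F}_T^{W^0}}$ by (A1)(iii). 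Since all these data are $\mathbb{F}^0$-adapted, the standard existence/uniqueness theorem for linear BSDEs (driven by $W^0$ alone) applies and yields a unique $(\varphi^{\xi,\mu,\nu},\Gamma^{\xi,\mu,\nu})\in S^2_{\mathbb{F}^0}\times L^2_{\mathbb{F}^0}$.

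For part (ii), with $P$ and $\varphi$ now fixed bounded/$L^2$ processes, the SDE \eqref{SDE} is linear in $X^{\xi,\mu,\nu}$ with coefficients $b_1+b_2 P_t$ (drift, bounded), $\sigma_1(t)+P_t^{\mathsf T}\sigma_2(t)$ type terms in the $W^0$-diffusion (bounded), and inhomogeneous parts $b_2\varphi_t+b_0(t,\mu_t,\nu_t)$, $\varphi_t^{\mathsf T}\sigma_2(t)+\sigma_0(t,\mu_t,\nu_t)$ that lie in $L^2_{\mathbb{F}}$; together with the $\mathcal{F}^W$-adapted term $\sigma(t)\in L^2_{\mathbb{F}^1}$, the classical theorem on linear SDEs with random coefficients gives a unique solution $X^{\xi,\mu,\nu}\in S^2_{\mathbb{F}}([0,T];\mathbb{R})$ with the usual $L^2$ estimate in terms of $\|\xi\|_{L^2}$ and the $L^2$-norms of $(\mu,\nu)$ and of $\sigma$. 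For part (iii), the route is to set $\widetilde Y_t := P_t X_t^{\xi,\mu,\nu}+\varphi_t^{\xi,\mu,\nu}$ and apply It\^o's formula using the Riccati dynamics \eqref{p equation}, the BSDE dynamics \eqref{BSDE}, and the SDE dynamics \eqref{SDE}; the $dt$-terms quadratic and linear in $X$ are designed to cancel exactly because of the Riccati equation (the $-(b_2 P)P - f_2 P - b_1 P - f_1$ structure), while the remaining inhomogeneous $dt$-terms reassemble precisely into $-[f_1 X + f_2(PX+\varphi) + f_0(\,\cdot\,)]\,dt = -[f_1 X + f_2\widetilde Y + f_0(\,\cdot\,)]\,dt$, and the terminal value matches since $P_T=h_1$ and $\varphi_T=h_2(\mu_T)$. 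Reading off the $dW$ and $dW^0$ integrands then identifies $Z_t = P_t\sigma(t)$ and $Z^0_t = P_t[\sigma_1(t)X_t^{\xi,\mu,\nu}+(P_tX_t^{\xi,\mu,\nu}+\varphi_t^{\xi,\mu,\nu})^{\mathsf T}\sigma_2(t)+\sigma_0(t,\mu_t,\nu_t)]+\Gamma_t^{\xi,\mu,\nu}$; uniqueness for the linear BSDE part (given $X^{\xi,\mu,\nu}$) then forces $(Y,Z,Z^0)$ to coincide with this triple, so $(X^{\xi,\mu,\nu},Y^{\xi,\mu,\nu},Z^{\xi,\mu,\nu},Z^{0,\xi,\mu,\nu})$ is the unique solution of \eqref{general fixed fbsde}.

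The only genuinely delicate point is the It\^o computation in (iii): one must verify that \emph{all} the $X$-dependent $dt$-terms cancel through the Riccati equation, which is exactly the algebraic identity that \eqref{p equation} was built to satisfy, and one must be careful with the matrix/vector structure of the terms $b_2(t)Y$, $Y^{\mathsf T}\sigma_2(t)$, and $f_2(t)Y$ (here $b_2,f_1\in\mathbb{R}^n$, $f_2\in\mathbb{R}^{n\times n}$, $\sigma_2\in\mathbb{R}^{n\times d}$) so that $(b_2 P_t)P_t$, $f_2 P_t$, etc. are interpreted consistently as in \eqref{p equation}. Everything else is a routine application of standard linear SDE/BSDE theory, using only the boundedness of $P$ from Lemma \ref{p lemma} and the integrability assumptions in (A1); the monotonicity conditions (A2)--(A3) enter solely through Lemma \ref{p lemma} and are not needed again at this stage.
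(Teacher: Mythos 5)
Your construction follows the paper's proof essentially verbatim: solve the linear BSDE \eqref{BSDE} and the linear SDE \eqref{SDE} by standard theory (using only the boundedness of $P$ from Lemma \ref{p lemma} and the integrability in (A1)), then verify via It\^o's formula and the Riccati identity \eqref{p equation} that $Y_t=P_tX_t+\varphi_t$, $Z_t=P_t\sigma(t)$, $Z^0_t=P_t[\cdots]+\Gamma_t$ solves the backward equation with the correct terminal value. The existence half is complete and matches the paper.

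The gap is in your final uniqueness step. You argue that uniqueness for the linear BSDE part \emph{given} $X^{\xi,\mu,\nu}$ forces $(Y,Z,Z^0)$ to coincide with your triple, and conclude uniqueness of the FBSDE. But \eqref{general fixed fbsde} is a \emph{coupled} system: the forward drift contains $b_2(s)Y_s$ and the $W^0$-diffusion contains $Y_s^{\mathsf T}\sigma_2(s)$, so a hypothetical second solution $(X',Y',Z',Z^{0\prime})$ need not have $X'=X^{\xi,\mu,\nu}$, and conditioning on "given $X$" begs the question. The paper closes this by invoking the uniqueness theorem of \cite{hua2022unified} for coupled FBSDEs. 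If you prefer to stay self-contained, you can instead show that \emph{any} solution decouples through $P$ and $\varphi$: set $\Delta_s:=Y'_s-P_sX'_s-\varphi_s$ and apply It\^o's formula; the Riccati equation makes all terms involving $X'$ and all inhomogeneous terms cancel, leaving
\begin{equation*}
d\Delta_s=-\bigl[f_2(s)\Delta_s+(b_2(s)\Delta_s)P_s\bigr]\,ds+(\text{martingale terms}),\qquad \Delta_T=0,
\end{equation*}
whence $\Delta\equiv0$ by uniqueness for linear BSDEs. Then $Y'=P X'+\varphi$, so $X'$ solves the SDE \eqref{SDE}, SDE uniqueness gives $X'=X^{\xi,\mu,\nu}$, and equality of the martingale integrands gives $Z'=Z$, $Z^{0\prime}=Z^0$. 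Either route works, but as written your argument does not establish uniqueness of the coupled system.
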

\begin{proof}
From Lemma \ref{p lemma} and assumption (A1), we note by the standard  BSDE theory, BSDE \eqref{BSDE} admits a unique solution $(\varphi^{\xi,\mu,\nu}, \Gamma^{\xi,\mu,\nu})$. Moreover, from standard SDE theory, the well-posedness of SDE \eqref{SDE} on $[0,T]$ is obtained. Now, define
\begin{equation*}
Y_{t}^{\xi,\mu,\nu}:=P_{t}X^{\xi,\mu,\nu}_{t}+\varphi^{\xi,\mu,\nu}_{t}, \quad Z_t^{\xi,\mu,\nu}:=P_t\sigma(t),
\end{equation*}
and 
\begin{equation*}
   Z_t^{0,\xi,\mu,\nu}:=P_t \left[\sigma_{1}(t)X_{t}^{\xi,\mu,\nu}+(P_t X_t^{\xi,\mu,\nu}+\varphi_t^{\xi,\mu,\nu})^{\mathsf{T}}\sigma_{2}(t)+\sigma_{0}(t,\mu_{t},\nu_{t}) \right]+\Gamma_t^{\xi,\mu,\nu}.
\end{equation*}
Note that $Y_{T}^{\xi,\mu,\nu} = P_{T}X_{T}^{\xi,\mu,\nu}+\varphi_{T}^{\xi,\mu,\nu}=P_{T}X_{T}^{\xi,\mu,\nu}+h_{2}(\mu_{T})$. Further, applying It\^{o}'s formula to $Y_{t}^{\xi,\mu,\nu}$, it holds that
\begin{equation*}
\begin{aligned}
    dY_{t}^{\xi,\mu,\nu} &= \Big[-(b_{2}(t) P_{t})P_{t}-f_{2}(t) P_{t}-b_{1}(t)P_{t}-f_{1}(t)\Big]X_{t}^{\xi,\mu,\nu}dt\\
    & \quad +P_{t}\left[ b_{1}(t)X_{t}^{\xi,\mu,\nu}+b_{2}(t)(P_{t}X_{t}^{\xi,\mu,\nu}+\varphi_{t}^{\xi,\mu,\nu})+b_{0}(t,\mu_{t},\nu_{t})\right]dt\\ 
    &\quad +P_{t}\left[\sigma(t)dW_{t}+\left(\sigma_{1}(t)X_{t}^{\xi,\mu,\nu}+(P_t X_t^{\xi,\mu,\nu}+\varphi_t^{\xi,\mu,\nu})^{\mathsf{T}}\sigma_{2}(t)+\sigma_{0}(t,\mu_{t},\nu_{t}) \right)dW_{t}^{0}\right]\\
    &\quad -\left[ f_{2}(t)\varphi_{t}^{\xi,\mu,\nu}+(b_{2}(t)\varphi_{t}^{\xi,\mu,\nu})P_{t} +f_{0}(t,\mu_{t},\nu_{t})+P_{t}b_{0}(t,\mu_{t},\nu_{t})\right] dt +\Gamma_{t}^{\xi,\mu,\nu}dW_{t}^{0}\\
    &=-\left[f_{1}(t)X_{t}^{\xi,\mu,\nu}+f_{2}(t)Y_{t}^{\xi,\mu,\nu}+f_{0}(t,\mu_{t},\nu_{t})\right]dt+Z_{t}^{\xi,\mu,\nu}dW_{t}+Z_{t}^{0,\xi,\mu,\nu}dW_{t}^{0}.
\end{aligned}
\end{equation*}
Moreover, recalling $Y_{t}^{\xi,\mu,\nu}=P_{t}X_{t}^{\xi,\mu,\nu}+\varphi^{\xi,\mu,\nu}_{t}$, we observe that $X_{t}^{\xi,\mu,\nu}$ satisfies the forward stochastic differential equation in \eqref{general fixed fbsde} on $[0,T]$. Therefore, we verify that $(X_{t}^{\xi,\mu,\nu},Y_{t}^{\xi,\mu,\nu},Z_{t}^{\xi,\mu,\nu},Z_{t}^{0,\xi,\mu,\nu})$ is a strong solution to FBSDE \eqref{general fixed fbsde}. The uniqueness of the solution to \eqref{general fixed fbsde} on $[0,T]$ follows from the uniqueness property stated in \cite{hua2022unified}.
\end{proof}

 Until now, the results are based on the given stochastic measure flow $(\mu,\nu) \in L^{2}_{\mathbb{F}^{0}}([0,T];\mathbb{R}) \times L^{2}_{\mathbb{F}^{0}}([0,T];\mathbb{R}^n)$. Now we introduce following map:
\begin{equation}
\begin{aligned}
  \mathcal{M}: L^{2}_{\mathbb{F}^{0}}([0,T];\mathbb{R}) \times L^{2}_{\mathbb{F}^{0}}([0,T];\mathbb{R}^n) &\rightarrow L^{2}_{\mathbb{F}^{0}}([0,T];\mathbb{R}) \times L^{2}_{\mathbb{F}^{0}}([0,T];\mathbb{R}^n),\\ (\mu_{t},\nu_{t}) &\mapsto (\mathbb{E}[X_{t}^{\xi,\mu,\nu}|\mathcal{F}_{t}^{W^{0}}],\mathbb{E}[Y_{t}^{\xi,\mu,\nu}|\mathcal{F}_{t}^{W^{0}}]).
  \end{aligned}
  \end{equation}

  By Lemma \ref{fix representation}, map $\mathcal{M}$ is well-defined and our goal is to find a fixed point for this map, in particular, the fixed point is unique. 
We can show that for any $s \in[0, t]$, $ \mathbb{E}[X_{s}^{\xi,\mu,\nu} | \mathcal{F}_t^{W^{0}}]=\mathbb{E}[X_{s}^{\xi,\mu,\nu} | \mathcal{F}_s^{W^{0}}]$. In fact, let $\mathcal{F}_{s, t}^{W^{0}}:=$ $\sigma\left\{W_r^0-W_s^0, s \leq r \leq t\right\}$. For all $s \in[0, t]$, it holds that $\mathcal{F}_t^{W^{0}}=\mathcal{F}_s^{W^{0}} \vee \mathcal{F}_{s, t}^{W^{0}}$, where $\mathcal{F}_s^{W^{0}}$ and $\mathcal{F}_{s, t}^{W^{0}}$ are independent. Noting that $X_{s}^{\xi,\mu,\nu} $ is independent of $\mathcal{F}_{s, t}^{W^{0}}$, we conclude $\mathbb{E}[X_{s}^{\xi,\mu,\nu} | \mathcal{F}_t^{W^{0}}]=\mathbb{E}[X_{s}^{\xi,\mu,\nu} | \mathcal{F}_s^{W^{0}} \vee \mathcal{F}_{s, t}^{W^{0}}]=\mathbb{E}[X_{s} ^{\xi,\mu,\nu}| \mathcal{F}_s^{W^{0}}]$.

Taking conditional expectation in the forward equation of \eqref{general fixed fbsde} with respect to $\mathcal{F}_t^{W^{0}}$, we have
\begin{equation}\label{expectation SDE}
\begin{aligned}
   \mathbb{E}[X_{t}^{\xi,\mu,\nu}| \mathcal{F}_{t}^{W^{0}}]&= 
\mathbb{E}[\xi]+\int_0^t\left[b_{1}(s) \mathbb{E}[X_{s}^{\xi,\mu,\nu} | \mathcal{F}_s^{W^{0}}]+b_{2}(s)\mathbb{E}[Y_{s}^{\xi,\mu,\nu} | \mathcal{F}_s^{W^{0}}] +b_{0}(s,\mu_{s},\nu_{s})\right]ds\\
&\quad +\int_{0}^{t}\left[\sigma_{1}(s)\mathbb{E}[X_{s}^{\xi,\mu,\nu} | \mathcal{F}_s^{W^{0}}]+\mathbb{E}[Y_{s}^{\xi,\mu,\nu} | \mathcal{F}_s^{W^{0}}]^{\mathsf{T}}\sigma_{2}(s)+\sigma_{0}(s,\mu_{s},\nu_{s}) \right]dW_{s}^{0}.
\end{aligned}
\end{equation}
\indent Similarly, recalling that $Z_{t}^{\xi,\mu,\nu} = P_{t}\sigma(t)$ from Lemma \ref{fix representation}, $P_{t}$ is deterministic and $\sigma(t)$ is $\mathcal{F}^{W}_{t}$ measurable, hence $ \mathbb{E}[\int_{0}^{t}Z_{s}^{\xi,\mu,\nu} dW_{s}| \mathcal{F}_{t}^{W^{0}}] = 0 $. Taking conditional expectation for the dynamics of $Y_{t}^{\xi,\mu,\nu} $ in \eqref{general fixed fbsde} with respect to $\mathcal{F}_{t}^{W^{0}}$, we get
\begin{equation}\label{expectation BSDE}
\begin{aligned}
\mathbb{E}[Y_{t}^{\xi,\mu,\nu} | \mathcal{F}_{t}^{W^{0}}]&=  \mathbb{E}[Y_{0}^{\xi,\mu,\nu} ]-\int_0^t\left[f_{1}(s) \mathbb{E}[X_{s}^{\xi,\mu,\nu}  | \mathcal{F}_s^{W^{0}}]+f_{2}(s)\mathbb{E}[Y_{s} ^{\xi,\mu,\nu} | \mathcal{F}_s^{W^{0}}] +f_{0}(s,\mu_{s},\nu_{s})\right]ds\\
&\quad+\int_{0}^{t}Z_{s}^{0,\xi,\mu,\nu}dW_{s}^{0}.    
\end{aligned}
\end{equation}

Combining
\eqref{expectation SDE} and \eqref{expectation BSDE}, we obtain that
 $(\mathbb{E}[X_{t}^{\xi,\mu,\nu}|\mathcal{F}_{t}^{W^{0}}],\mathbb{E}[Y_{t}^{\xi,\mu,\nu}|\mathcal{F}_{t}^{W^{0}}])$ satisfies the following classical FBSDE
\begin{equation}
\left\{
\begin{aligned}
   d\mathbb{E}[X_{t}^{\xi,\mu,\nu}|\mathcal{F}_{t}^{W^{0}}] &=\left[b_{1}(t)\mathbb{E}[X_{t}^{\xi,\mu,\nu}|\mathcal{F}_{t}^{W^{0}}]+b_{2}(t)\mathbb{E}[Y_{t}^{\xi,\mu,\nu}|\mathcal{F}_{t}^{W^{0}}]+b_{0}(t,\mu_{t},\nu_{t})\right]dt\\
   &\quad+\Big[\sigma_{1}(t)\mathbb{E}[X_{t}^{\xi,\mu,\nu} | \mathcal{F}_t^{W^{0}}]
   +\mathbb{E}[Y_{t}^{\xi,\mu,\nu} | \mathcal{F}_t^{W^{0}}]^{\mathsf{T}}\sigma_{2}(t)+\sigma_{0}(t,\mu_{t},\nu_{t}) \Big]dW_{t}^{0},\\
   d\mathbb{E}[Y_{t}^{\xi,\mu,\nu}|\mathcal{F}_{t}^{W^{0}}] &= -\Big[f_{1}(t)\mathbb{E}[X_{t}^{\xi,\mu,\nu}|\mathcal{F}_{t}^{W^{0}}]+f_{2}(t)\mathbb{E}[Y_{t}^{\xi,\mu,\nu}|\mathcal{F}_{t}^{W^{0}}]+f_{0}(t,\mu_{t},\nu_{t})\Big]dt+Z_{t}^{0,\xi,\mu,\nu}dW_{t}^{0},\\
     \mathbb{E}[X_{0}^{\xi,\mu,\nu}|\mathcal{F}_{0}^{W^{0}}] &= \mathbb{E}[\xi],\quad \mathbb{E}[Y_{T}^{\xi,\mu,\nu}|\mathcal{F}_{T}^{W^{0}}] = h_{1}\mathbb{E}[X_{T}^{\xi,\mu,\nu}|\mathcal{F}_{T}^{W^{0}}]+h_{2}(\mu_{T}).
\end{aligned}\right.\label{fixed conditional FBSDE}
\end{equation}

Therefore finding the fixed point of the map $\mathcal{M}$ is converted to solve the corresponding FBSDE by replacing $(\mu_t,\nu_t)$ in \eqref{fixed conditional FBSDE} with $(\mathbb{E}[X_{t}^{\xi,\mu,\nu}|\mathcal{F}_{t}^{W^{0}}],\mathbb{E}[Y_{t}^{\xi,\mu,\nu}|\mathcal{F}_{t}^{W^{0}}])$, which is equivalent with the following FBSDE
\begin{equation}
\left\{
\begin{aligned}
   d\mathbb{E}[X_{t}^{\xi}|\mathcal{F}_{t}^{W^{0}}] &=\Big[ b_{1}(t)\mathbb{E}[X_{t}^{\xi}|\mathcal{F}_{t}^{W^{0}}]+b_{2}(t)\mathbb{E}[Y_{t}^{\xi}|\mathcal{F}_{t}^{W^{0}}]+b_{0}(t,\mathbb{E}[X_{t}^{\xi}|\mathcal{F}_{t}^{W^{0}}],\mathbb{E}[Y_{t}^{\xi}|\mathcal{F}_{t}^{W^{0}}])\Big]dt\\
   &\quad +\Big[\sigma_{1}(t)\mathbb{E}[X_{t}^{\xi}| \mathcal{F}_t^{W^{0}}]+\mathbb{E}[Y_{t} ^{\xi}| \mathcal{F}_t^{W^{0}}]^{\mathsf{T}}\sigma_{2}(t)+\sigma_{0}(t,\mathbb{E}[X_{t}^{\xi}|\mathcal{F}_{t}^{W^{0}}],\mathbb{E}[Y_{t}^{\xi}|\mathcal{F}_{t}^{W^{0}}]) \Big]dW_{t}^{0},\\
   d\mathbb{E}[Y_{t}^{\xi}|\mathcal{F}_{t}^{W^{0}}] &= -\Big[f_{1}(t)\mathbb{E}[X_{t}^{\xi}|\mathcal{F}_{t}^{W^{0}}]+f_{2}(t)\mathbb{E}[Y_{t}^{\xi}|\mathcal{F}_{t}^{W^{0}}]+f_{0}(t,\mathbb{E}[X_{t}^{\xi}|\mathcal{F}_{t}^{W^{0}}],\mathbb{E}[Y_{t}|\mathcal{F}_{t}^{W^{0}}])\Big]dt+Z^{0,\xi}_{t}dW_{t}^{0},\\
   \mathbb{E}[X_{0}^{\xi}|\mathcal{F}_{0}^{W^{0}}] & = \mathbb{E}[\xi],\quad \mathbb{E}[Y_{T}^{\xi}|\mathcal{F}_{T}^{W^{0}}] = h_{1}\mathbb{E}[X_{T}^{\xi}|\mathcal{F}_{T}^{W^{0}}]+h_{2}(\mathbb{E}[X_{T}^{\xi}|\mathcal{F}_{T}^{W^{0}}]).
\end{aligned}\label{conditional expectation FBSDE}\right.
\end{equation}
\begin{lemma}\label{map lemma}
Suppose assumptions $ (A1)$ and $ (A3) $ hold, then FBSDE \eqref{conditional expectation FBSDE} admits a unique solution $(\mathbb{E}[X^{\xi}|\mathcal{F}^{W^{0}}],\mathbb{E}[Y^{\xi}|\mathcal{F}^{W^{0}}],Z^{0,\xi}) \in S^{2}_{\mathbb{F}^{0}}([0,T];\mathbb{R})\times S^{2}_{\mathbb{F}^{0}}([0,T];\mathbb{R}^{n})\times L^{2}_{\mathbb{F}^{0}}([0,T];\mathbb{R}^{n \times d})$. Moreover, for any $\xi_{1},\xi_{2}\in L^{2}_{\mathcal {F}_{0}}$, \begin{equation}\label{expectation uniform}
 \left |\mathbb{E}[Y_{t}^{\xi_{1}}|\mathcal{F}_{t}^{W^{0}}]-\mathbb{E}[Y_{t}^{\xi_{2}}|\mathcal{F}_{t}^{W^{0}}]\right|\leq C\left|\mathbb{E}[X_{t}^{\xi_{1}}|\mathcal{F}_{t}^{W^{0}}]-\mathbb{E}[X_{t}^{\xi_{2}}|\mathcal{F}_{t}^{W^{0}}]\right|,\quad \forall t\in[0,T],
\end{equation}
where C is constant only depending on $n,K,T$.
\end{lemma}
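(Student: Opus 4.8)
The plan is to treat \eqref{conditional expectation FBSDE} as a classical (non--mean-field) forward-backward SDE on $(\Omega,\mathcal F,\mathbb F^{0},\mathbb P)$ driven solely by $W^{0}$ and to solve it by the method of \cite{hua2022unified}, exactly as \eqref{general fixed fbsde} was solved in Lemma \ref{fix representation}; the only new feature is that the inputs $(\mu,\nu)$ have been replaced by the unknowns $(\mathbb E[X^{\xi}|\mathcal F^{W^{0}}],\mathbb E[Y^{\xi}|\mathcal F^{W^{0}}])$ themselves, so the nonlinear terms $b_{0},f_{0},\sigma_{0},h_{2}$ genuinely couple the system. Under (A1) all coefficients of \eqref{conditional expectation FBSDE} are Lipschitz in $(\bar x,\bar y)$ with constant controlled by $n,K$ and have the required integrability at the origin, so the only point to verify is the structural monotonicity hypothesis of \cite{hua2022unified}, and the claim is that (A3) is precisely its coefficient-level form.

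To see this, fix $\theta_{1}=(\bar x_{1},\bar y_{1})$, $\theta_{2}=(\bar x_{2},\bar y_{2})$ and expand the increments of $b_{0},f_{0},h_{2}$ using the difference-quotient functions $b_{3},b_{4},f_{3},f_{4},h_{2}^{i}$ of \eqref{quadratic notation} (and those of $\sigma_{0}$ using analogous difference quotients, bounded by $K$). Then the difference of any two solutions solves a linear FBSDE whose forward drift has $\bar y$-coefficient $b_{2}+b_{4}$, whose backward driver has $\bar x$-coefficient $f_{1}+f_{3}$ and $\bar y$-coefficient $f_{2}+f_{4}$, and whose terminal value has $\bar x$-coefficient $h_{1}+h_{2}^{i}(\bar x_{1},\bar x_{2})$, the diffusion coefficients being merely bounded by $K$; assumption (A3) states exactly that these effective coefficients satisfy the sign pattern of (A2) --- forward-backward coupling of the ``good'' sign and off-diagonal entries of the driver's $\bar y$-coefficient nonnegative. (With $\theta_{1}=\theta_{2}$ and the convention for vanishing denominators, (A3) already contains (A2), so Lemma \ref{p lemma} is available; the full strength of (A3) is needed precisely because the nonlinear terms are now active.) Case (ii) of (A3) reduces to case (i) via the change of unknown $Y\mapsto -Y$, so it suffices to treat case (i).

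Given this verification, \cite{hua2022unified} yields existence and uniqueness of $(\mathbb E[X^{\xi}|\mathcal F^{W^{0}}],\mathbb E[Y^{\xi}|\mathcal F^{W^{0}}],Z^{0,\xi})$ in the stated spaces, together with a decoupling field $U$ (a random field in general, since the coefficients may be $\mathbb F^{0}$-adapted) such that $\mathbb E[Y_{t}^{\xi}|\mathcal F_{t}^{W^{0}}]=U(t,\mathbb E[X_{t}^{\xi}|\mathcal F_{t}^{W^{0}}])$ and $U(t,\cdot)$ is Lipschitz with a constant $C=C(n,K,T)$. Since the same $U$ serves every initial datum, \eqref{expectation uniform} follows at once by writing $\mathbb E[Y_{t}^{\xi_{1}}|\mathcal F_{t}^{W^{0}}]-\mathbb E[Y_{t}^{\xi_{2}}|\mathcal F_{t}^{W^{0}}]=U(t,\mathbb E[X_{t}^{\xi_{1}}|\mathcal F_{t}^{W^{0}}])-U(t,\mathbb E[X_{t}^{\xi_{2}}|\mathcal F_{t}^{W^{0}}])$ and using the Lipschitz bound. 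Equivalently, one can obtain \eqref{expectation uniform} by hand: the difference of the two solutions for $\xi_{1},\xi_{2}$ satisfies a homogeneous linear FBSDE whose coefficients --- the affine parts plus the bounded difference quotients along the two solutions --- obey the sign pattern of (A2), so its own decoupling field is a matrix $\Pi_{t}$ solving a Riccati equation of the type in Lemma \ref{p lemma}, and $|\Pi_{t}|\le C(n,K,T)$ gives $|\delta Y_{t}|\le C|\delta X_{t}|$ with $\delta X,\delta Y$ the differences of the respective components.

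I expect the only real work to be the translation carried out in the second paragraph. One must confirm that (A3) implies the monotonicity hypothesis of \cite{hua2022unified} in the vector-valued case, which is where the off-diagonal condition $f_{2}^{ij}+f_{4}^{ij}\ge 0$ ($i\ne j$) is essential, as it is the quasi-monotonicity that keeps the backward component under control; one must handle that $b_{0},f_{0},\sigma_{0},h_{2}$ are only Lipschitz, so difference quotients rather than derivatives are the right objects; and one must check that all the ensuing a priori estimates --- hence the Lipschitz constant of $U$ and the constant $C$ in \eqref{expectation uniform} --- depend only on $n,K,T$. Once this is in place, both the well-posedness and \eqref{expectation uniform} follow from results already available.
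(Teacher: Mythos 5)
Your proposal is correct and follows essentially the same route as the paper: the paper's proof of Lemma \ref{map lemma} is a one-line verification that (A1) and (A3) place FBSDE \eqref{conditional expectation FBSDE} within the scope of the monotonicity framework of \cite{hua2022unified}, which then delivers both the well-posedness and the Lipschitz estimate \eqref{expectation uniform}. You simply spell out the coefficient-level verification (via the difference quotients of \eqref{quadratic notation}) that the paper leaves implicit.
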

\begin{proof}
It can be verified that under assumptions (A1) and (A3) the monotonicity conditions in \cite{hua2022unified} are satisfied, which ensures the FBSDE \eqref{conditional expectation FBSDE} has a unique solution and \eqref{expectation uniform} holds.
\end{proof}

\indent Now we are ready to give the proof of Theorem \ref{global general theorem}.

\begin{proof}[Proof of Theorem \ref{global general theorem}]
By Lemma \ref{map lemma}, the map $\mathcal{M}$ has  a unique fixed point $(\mu,\nu)$. As explained above, solving \eqref{general fixed fbsde} with this $(\mu,\nu)$ as input, and denoting by $(X^{\xi},Y^{\xi},Z^{\xi})$ the resulting solution, by definition of a fixed point, we have $\mathbb{E}[X_{t}^{\xi}|\mathcal{F}_{t}^{W^{0}}] = \mu_{t}$ and $\mathbb{E}[Y_{t}^{\xi}|\mathcal{F}_{t}^{W^{0}}]=\nu_{t}$. We conclude that $(X_{t}^{\xi},Y_{t}^{\xi},Z_{t}^{\xi})_{0\leq t \leq T}$ is the unique solution of \eqref{conditional fbsde}.
\end{proof}

We conclude this subsection by giving a representation result for conditional mean field FBSDE \eqref{conditional fbsde}. 
 For any $t\in[0,T],\eta \in L^{2}_{\mathcal{F}^{W^{0}}_{t}}$, we now introduce the following FBSDE
 \begin{equation}\label{phi FBSDE}
 \left\{
     \begin{aligned}
      d \nu_{s}^{t,\eta} &= \Big[(b_{1}(s) +b_{2}(s) P_{s}) \nu_{s}^{t,\eta} + b_{2}(s)\varphi_{s}^{t,\eta} + b_{0}(s,\nu_{s}^{t,\eta},P_{s}\nu_{s}^{t,\eta}+\varphi_{s}^{t,\eta})\Big]ds\\
      &\quad +\Big[\sigma_{1}\nu_{s}^{t,\eta}+(P_{s}\nu_{s}^{t,\eta}+\varphi_{s}^{t,\eta})^{\mathsf{T}}\sigma_{2}+\sigma_{0}(s,\nu_{s}^{t,\eta},P_{s}\nu_{s}^{t,\eta}+\varphi_{s}^{t,\eta})\Big]dW_{s}^{0},\\
      d\varphi_{s}^{t,\eta}&=-\Big[ f_{2}(s)\varphi_{s}^{t,\eta}+(b_{2}(s)\varphi_{s}^{t,\eta})P_{s} +f_{0}(s,\nu_{s}^{t,\eta},P_{s}\nu_{s}^{t,\eta}+\varphi_{s}^{t,\eta})\\
      &\quad+P_{s}b_{0}(s,\nu_{s}^{t,\eta},P_{s}\nu_{s}^{t,\eta}+\varphi_{s}^{t,\eta})\Big]ds+z^{t,\eta}_{s}dW_{s}^{0},\\
      \nu_{t}&=\eta,\quad \varphi_{T} = h_{2}(\nu_{T}^{t,\eta}).
     \end{aligned}
     \right.
 \end{equation}
 \begin{Theorem}\label{phi representation theorem}
 Let assumptions $(A1)-(A3)$ hold, then for any $(t,\eta) \in [0,T]\times L^{2}_{\mathcal{F}_{t}^{W^{0}}}$, FBSDE \eqref{phi FBSDE} admits a unique solution and it satisfies that for any $s\in[t,T]$,
 \begin{equation}
   \nu^{t,\eta}_s= \mathbb{E}[X_{s}^{t,\xi}|\mathcal {F}^{W^{0}}_{s}],\quad \varphi_{s}^{t,\eta}=Y_{s}^{t,\xi}-P_{s}X_{s}^{t,\xi},\label{u x v}
   \end{equation}
where $X^{t,\xi},Y^{t,\xi}$ are the first two components of the unique solution of conditional mean field FBSDE \eqref{conditional fbsde} with initial condition $\xi\in L^{2}_{\mathcal{F}_{t}}$ satisfying $\mathbb{E}[\xi|\mathcal{F}_{t}^{W^{0}}] = \eta$ and $P$ is the unique solution of \eqref{p equation}.\label{phi theorem}
\end{Theorem}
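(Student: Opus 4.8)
The plan is to reduce FBSDE \eqref{phi FBSDE} to the already-solved system \eqref{conditional expectation FBSDE} through the linear change of variables suggested by Lemma \ref{fix representation}, namely $\widetilde{Y}_s:=P_s\nu_s^{t,\eta}+\varphi_s^{t,\eta}$, and simultaneously to exhibit an explicit solution built from the solution of \eqref{conditional fbsde}; uniqueness then forces the representation \eqref{u x v}. All of this mirrors the construction already carried out on $[0,T]$, the only new ingredient being that everything is run on a general interval $[t,T]$, which is legitimate since assumptions $(A1)$--$(A3)$, Lemma \ref{p lemma}, Lemma \ref{fix representation}, Lemma \ref{map lemma} and Theorem \ref{global general theorem} are all pointwise in time.

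I would first fix $(t,\eta)\in[0,T]\times L^{2}_{\mathcal{F}_t^{W^0}}$ and pick any $\xi\in L^{2}_{\mathcal{F}_t}$ with $\mathbb{E}[\xi|\mathcal{F}_t^{W^0}]=\eta$ (for instance $\xi=\eta$). Applying Theorem \ref{global general theorem} on $[t,T]$ produces the unique solution $(X^{t,\xi},Y^{t,\xi},Z^{t,\xi},Z^{0,t,\xi})$ of \eqref{conditional fbsde}; set $\mu_s:=\mathbb{E}[X_s^{t,\xi}|\mathcal{F}_s^{W^0}]$ and $\nu_s:=\mathbb{E}[Y_s^{t,\xi}|\mathcal{F}_s^{W^0}]$. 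By the fixed-point argument in the proof of Theorem \ref{global general theorem}, $(\mu,\nu)$ is the fixed point of $\mathcal{M}$ on $[t,T]$ and $(X^{t,\xi},Y^{t,\xi})$ coincides with the solution $(X^{\xi,\mu,\nu},Y^{\xi,\mu,\nu})$ of \eqref{general fixed fbsde}; hence Lemma \ref{fix representation}(iii) gives $Y_s^{t,\xi}=P_sX_s^{t,\xi}+\varphi_s^{\xi,\mu,\nu}$, where $\varphi^{\xi,\mu,\nu}$ is the $\mathbb{F}^{0}$-adapted solution of the BSDE \eqref{BSDE} (posed on $[t,T]$). Taking $\mathbb{E}[\,\cdot\,|\mathcal{F}_s^{W^0}]$ in this identity and using $\mathcal{F}_s^{W^0}$-measurability of $\varphi^{\xi,\mu,\nu}$ yields the key relation
\begin{equation*}
\nu_s=P_s\mu_s+\varphi_s^{\xi,\mu,\nu},\qquad s\in[t,T].
\end{equation*}
I would then declare $(\nu^{t,\eta},\varphi^{t,\eta},z^{t,\eta}):=(\mu,\varphi^{\xi,\mu,\nu},\Gamma^{\xi,\mu,\nu})$ and verify term by term that this triple solves \eqref{phi FBSDE}: the initial condition follows from $\mu_t=\mathbb{E}[\xi|\mathcal{F}_t^{W^0}]=\eta$, the terminal condition from $\varphi_T^{\xi,\mu,\nu}=h_2(\mu_T)=h_2(\nu_T^{t,\eta})$, and the drift and diffusion coefficients match once $\nu_s=P_s\mu_s+\varphi_s^{\xi,\mu,\nu}$ is substituted into \eqref{expectation SDE} and \eqref{BSDE} and the $\nu^{t,\eta}$-linear terms are regrouped; in particular the nonlinear coefficients $b_0,f_0,\sigma_0$ evaluated at $(\mu_s,\nu_s)$ become exactly those evaluated at $(\nu_s^{t,\eta},P_s\nu_s^{t,\eta}+\varphi_s^{t,\eta})$. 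This establishes at once the existence of a solution to \eqref{phi FBSDE} and the representation \eqref{u x v}.

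For uniqueness I would take an arbitrary solution $(\nu,\varphi,z)$ of \eqref{phi FBSDE} on $[t,T]$, set $\widetilde{Y}_s:=P_s\nu_s+\varphi_s$, and apply It\^{o}'s formula together with the Riccati equation \eqref{p equation} — precisely as in the proof of Lemma \ref{fix representation} — to conclude that $(\nu,\widetilde{Y})$ solves the classical FBSDE \eqref{conditional expectation FBSDE} on $[t,T]$ with $\nu_t=\eta$. Lemma \ref{map lemma}, applied on $[t,T]$, guarantees that this FBSDE has a unique solution, so $\nu$ and $\widetilde{Y}$ are uniquely determined; then $\varphi=\widetilde{Y}-P\nu$ is unique and $z$ is unique by identifying martingale parts. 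Combining with the previous paragraph, \eqref{phi FBSDE} is well-posed and its solution is given by \eqref{u x v}.

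The only step requiring genuine care — and therefore the main obstacle — is the It\^{o} computation showing that the substitution $\varphi\mapsto P\nu+\varphi$ transforms \eqref{phi FBSDE} into \eqref{conditional expectation FBSDE}: one must check that the quadratic term $-(b_2(s)P_s)P_s$ and the linear terms generated by $dP_s$ cancel exactly against those coming from $d\nu_s$ and $d\varphi_s$, and that $P_s\nu_s+\varphi_s$ indeed plays the role of the conditional expectation of $Y$ inside $b_0,f_0,\sigma_0$. Since this reproduces the verification already done in Lemma \ref{fix representation}, no new difficulty arises and the remaining estimates are routine.
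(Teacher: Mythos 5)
Your proposal is correct and follows essentially the same route as the paper: existence via the fixed point of $\mathcal{M}$ from Theorem \ref{global general theorem} combined with the representation $Y=PX+\varphi$ of Lemma \ref{fix representation}, then conditional expectations to identify $(\nu^{t,\eta},\varphi^{t,\eta})$, and uniqueness via Lemma \ref{map lemma}. Your uniqueness step (explicitly transforming an arbitrary solution of \eqref{phi FBSDE} by $\widetilde{Y}=P\nu+\varphi$ back into \eqref{conditional expectation FBSDE}) is merely a more detailed writing of what the paper invokes in one line.
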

\begin{proof}
    The proof follows immediately by a  slight modification of the proof of Lemma \ref{fix representation}. Under assumptions (A1)-(A3), it follows from Theorem \eqref{global general theorem} that  $ (\mathbb{E}[X^{t,\xi}_{s}|\mathcal{F}^{W^{0}}_{s}],\mathbb{E}[Y_{s}^{t,\xi}|\mathcal{F}_{s}^{W^{0}}])$ is the fixed point of the map $\mathcal{M}$. Replacing the fixed processes $(\mu_{s},\nu_{s})$ in SDE \eqref{SDE} and BSDE \eqref{BSDE} by $(\mathbb{E}[X^{t,\xi}_{s}|\mathcal{F}^{W^{0}}_{s}],\mathbb{E}[Y_{s}^{t,\xi}|\mathcal{F}_{s}^{W^{0}}])$, we get
    \begin{equation}
 \left\{
 \begin{aligned}
  dX_{s}^{t,\xi}&=\left[ b_{1}X_{s}^{t,\xi}+b_{2}(P_{s}X_{s}^{t,\xi}+\varphi_{s}^{t,\xi})+b_{0}(s,\mathbb{E}[X^{t,\xi}_{s}|\mathcal{F}^{W^{0}}_{s}],\mathbb{E}[Y^{t,\xi}_{s}|\mathcal{F}^{W^{0}}_{s}])\right]ds+\sigma_{s}dW_{s}\\
      &\quad +\Big[\sigma_{1}(s)X_{s}^{t,\xi}+(P_s X_s^{t,\xi}+\varphi_s^{t,\xi})^{\mathsf{T}}\sigma_{2}(s)+\sigma_{0}(s,\mathbb{E}[X^{t,\xi}_{s}|\mathcal{F}^{W^{0}}_{s}],\mathbb{E}[Y^{t,\xi}_{s}|\mathcal{F}^{W^{0}}_{s}]) \Big]dW_{s}^{0}\\
    d\varphi_{s} ^{t,\xi}&=-\left[ f_{2}(s)\varphi_{s}^{t,\xi}+(b_{2}(s)\varphi_{s}^{t,\xi})P_{s} +f_{0}(s,\mathbb{E}[X^{t,\xi}_{s}|\mathcal{F}^{W^{0}}_{s}],\mathbb{E}[Y^{t,\xi}_{s}|\mathcal{F}^{W^{0}}_{s}])\right.\\
    &\quad\left. +P_{s}b_{0}(s,\mathbb{E}[X^{t,\xi}_{s}|\mathcal{F}^{W^{0}}_{s}],\mathbb{E}[Y^{t,\xi}_{s}|\mathcal{F}^{W^{0}}_{s}])\right] ds+\Gamma_{s}^{t,\xi}dW_{s}^{0},   \\
    X_{t}&=\xi, \quad \varphi_{T}^{t,\xi}=h_{2}(\mathbb{E}[X^{t,\xi}_{T}|\mathcal{F}^{W^{0}}_{T}]). 
 \end{aligned}
  \right.\label{xphi_FBSDE}
 \end{equation}
 By Lemma \ref{fix representation}, we obtain that 
    \begin{equation*}
     \varphi_s^{t,\xi}  = Y_s^{t,\xi}- P_s X_s^{t,\xi}. 
    \end{equation*}
Taking conditional expectation in above equation with respect to $\mathcal{F}^{W^{0}}_{s}$, we obtain
\begin{equation} \label{phi linear}
  \varphi_s^{t,\xi}  =\mathbb{E}[Y_s^{t,\xi}|\mathcal{F}^{W^{0}}_{s}]- P_s \mathbb{E}[Y_s^{t,\xi}|\mathcal{F}^{W^{0}}_{s}].  
\end{equation}
Moreover, taking conditional expectation in the forward equation of \eqref{xphi_FBSDE} with respect to $\mathcal{F}^{W^{0}}_{s}$, we get
\begin{equation} \label{nu sde}
\begin{aligned}
  d\mathbb{E}[X^{t,\xi}_{s}|\mathcal{F}^{W^{0}}_{s}]&=\left[ b_{1}\mathbb{E}[X^{t,\xi}_{s}|\mathcal{F}^{W^{0}}_{s}]+b_{2}(P_{s}\mathbb{E}[X^{t,\xi}_{s}|\mathcal{F}^{W^{0}}_{s}]+\varphi_{s}^{t,\xi})+b_{0}(s,\mathbb{E}[X^{t,\xi}_{s}|\mathcal{F}^{W^{0}}_{s}],\mathbb{E}[Y^{t,\xi}_{s}|\mathcal{F}^{W^{0}}_{s}])\right]ds\\
  &\quad +\Big[\sigma_{1}(s)\mathbb{E}[X^{t,\xi}_{s}|\mathcal{F}^{W^{0}}_{s}]+(P_s \mathbb{E}[X^{t,\xi}_{s}|\mathcal{F}^{W^{0}}_{s}]+\varphi_s^{t,\xi})^{\mathsf{T}}\sigma_{2}(s)\\
  &\quad +\sigma_{0}(s,\mathbb{E}[X^{t,\xi}_{s}|\mathcal{F}^{W^{0}}_{s}],\mathbb{E}[Y^{t,\xi}_{s}|\mathcal{F}^{W^{0}}_{s}]) \Big]dW_{s}^{0}. \end{aligned}
\end{equation}
Combining \eqref{xphi_FBSDE}, \eqref{nu sde} and \eqref{phi linear}, we get that $\mathbb{E}[X^{t,\xi}_{s}|\mathcal{F}^{W^{0}}_{s}], Y_{s}-P_{s}X^{t,\xi}_{s}$ are the first two components of a solution of FBSDE \eqref{phi FBSDE} and the uniqueness property is guaranteed by the classical FBSDEs results in small intervals and the result of Lemma \ref{p lemma} and Lemma \ref{map lemma}.
\end{proof}

\subsection{Classical solution of the associated master equation}\label{section 3.2}
In this subsection, we will consider classical solution for the master equation related to conditional mean field FBSDE \eqref{conditional fbsde}. More precisely, we study classical solution of the following equation
\begin{equation}
\left\{\begin{aligned}
&\partial_t U(t, x, \nu)+\partial_x U(t, x, \nu)\Big[b_{1}(t)x+b_{2}(t) U(t, x, \nu)+b_{0}\left(t,\nu,\mathbb{E}[U(t, \xi, \nu)]\right)\Big]\\
&+\frac{1}{2} \partial_{x x} U(t, x, \nu)\Big|\sigma_{1}x+U(t,x,\nu)^{\mathsf{T}}\sigma_{2}+\sigma_{0}(t,\nu,\mathbb{E}[U(t,\xi,\nu)])\Big|^{2}+\frac{1}{2}\partial_{x x} U(t, x, \nu)\sigma^{2}\\
&+\partial_\nu U(t, x, \nu)\Big[b_{1}(t)\nu+b_{2}(t) \mathbb{E}[U(t, \xi, \nu)]+b_{0}(t,\nu,\mathbb{E}[U(t, \xi, \nu)])\Big] \\
&+\frac{1}{2}\partial_{\nu\nu}U(t,x,\nu)\Big|\sigma_{1}\nu+\mathbb{E}[U(t,\xi,\nu)]^{\mathsf{T}}\sigma_{2}+\sigma_{0}(t,\nu,\mathbb{E}[U(t,\xi,\nu)])\Big|^{2}\\
&+\partial_{x\nu}U(t,x,\nu)\Big[\left(\sigma_{1}x+U(t,x,\nu)^{\mathsf{T}}\sigma_{2}+\sigma_{0}(t,\nu,\mathbb{E}[U(t,\xi,\nu)])\right)\\
&\quad\left(\sigma_{1}\nu+\mathbb{E}[U(t,\xi,\nu)]^{\mathsf{T}}\sigma_{2}+\sigma_{0}(t,\nu,\mathbb{E}[U(t,\xi,\nu)])\right)^{\mathsf{T}}\Big]\\
&+f_{1}(t)x+f_{2}(t)U(t,x,\nu)+f_{0}(t,\nu,\mathbb{E}[U(t,\xi,\nu)])=0, \\
&U(T, x, \nu)=h_{1}x+h_{2}(\nu),
\end{aligned}\right.\label{general master equation}
\end{equation}
where $\xi \in L^{2}_{\mathcal{F}_{0}}$ with $\mathbb{E}[\xi] = \nu$.  We further introduce the following assumption:\\
\textbf{Assumption (A4):} The functions $f_{0},b_{0},\sigma_{0},\sigma,h_{2}$ are deterministic satisfying for any $t\in[0,T]$,
\begin{equation*}
    |b_0(t,0,0)|\leq K,~|f_0(t,0,0)|\leq K,~|\sigma_{0}(t,0,0)|\leq K,~|\sigma(0)|\leq K,~|h_2(0)|\leq K,
\end{equation*}
 and $f_{0}(t,\cdot,\cdot)\in C^{2}(\mathbb{R}\times \mathbb{R}^{n};\mathbb{R}^n)$, $b_{0}(t,\cdot,\cdot)\in C^{2}(\mathbb{R}\times \mathbb{R}^{n};\mathbb{R})$, $ \sigma_{0}(t,\cdot,\cdot)\in C^{2}(\mathbb{R}\times \mathbb{R}^{n};\mathbb{R}^{d})$ and $h_{2}\in C^{2}(\mathbb{R};\mathbb{R}^{n})$.

From now we consider in Markovian setting and define the function $\Phi:[0,T]\times\mathbb{R}\rightarrow \mathbb{R}^{n}$ as
\begin{equation}
    \Phi(t,\nu) = \varphi_{t}^{t,\nu},\label{phi decoupling}
\end{equation}
where $\varphi^{t,\nu}_{t}$ is the second component of the unique solution of FBSDE \eqref{phi FBSDE} with initial condition $\nu\in \mathbb{R}$, and further it holds that $\varphi_{s}^{t,\nu}=\Phi(s,\nu_{s}^{t,\nu})$, for all $s \in [t,T]$ (see \cite{delarue2002existence}), which implies that $\Phi(t,\nu)$ 
corresponds to the following PDE
\begin{equation}
\left\{
    \begin{aligned}
     &\partial_{t}\Phi(t,\nu)+\partial _{\nu}\Phi(t,\nu)\Big[(b_{1}(t)+b_{2}(t)P_{t})\nu+b_{2}(t)\Phi(t,\nu)+b_{0}(t,\nu,P_{t}\nu+\Phi(t,\nu))\Big]\\
     &+\frac{1}{2}\partial_{\nu \nu}\Phi(t,\nu)\Big|\sigma_{1}(t)\nu+(P_{t}\nu+\Phi(t,\nu))^{\mathsf{T}}\sigma_{2}(t)+\sigma_{0}(t,\nu,P_{t}\nu+\Phi(t,\nu)\Big|^{2}\\
     &+(b_{2}(t) P_{t})\Phi(t,\nu)+f_{2}(t)\Phi(t,\nu)+f_{0}(t,\nu,P_{t}\nu+\Phi(t,\nu))+P_{t}b_{0}(t,\nu,P_{t}\nu+\Phi(t,\nu))=0,\\
     &\Phi(T,\nu) = h_{2}(\nu).
    \end{aligned}\right.\label{phi pde}
\end{equation}

We would like to show that 
 $\Phi\in C^{1,2}([0,T]\times\mathbb{R};\mathbb{R}^{n})$ to verify $\Phi$ is indeed a classical solution to \eqref{phi pde}. Now let us consider the following FBSDE on $[t,T]$, which can be interpreted as a formal differentiation of \eqref{phi FBSDE} with its initial condition and we choose $\nu\in \mathbb{R}$ as its initial condition.
\begin{equation}
\left\{\begin{aligned}
d \nabla \nu_s^{t, \nu}= & \Big[(b_{1}(s)+b_{2}(s) P_{s})\nabla\nu_{s}^{t,\nu}+b_{2}(s) \nabla\varphi_{s}^{t,\nu}\\
&+\partial_{\nu}b_{0}(s,\nu_{s}^{t,\nu},P_{s}\nu^{t,\nu}_{s}+\varphi_{s}^{t,\nu})\nabla\nu_{s}^{t,\nu}+\partial_{\varphi}b_{0}(s,\nu_{s}^{t,\nu},P_{s}\nu^{t,\nu}_{s}+\varphi_{s}^{t,\nu})\nabla\varphi_{s}^{t,\nu}\Big]ds\\
& + \Big[\sigma_{1}(s)\nabla \nu_{s}^{t,\nu}+(\nabla \varphi_{s}^{t,\nu})^{\mathsf{T}}\sigma_{2}(s)+\partial_{\nu}\sigma_{0}(s,\nu_{s}^{t,\nu},P_{s}\nu^{t,\nu}_{s}+\varphi_{s}^{t,\nu})\nabla\nu_{s}^{t,\nu}\\
&+\partial_{\varphi}\sigma_{0}(s,\nu_{s}^{t,\nu},P_{s}\nu^{t,\nu}_{s}+\varphi_{s}^{t,\nu})\nabla\varphi_{s}^{t,\nu}\Big]dW_{s}^{0}\\
d \nabla \varphi_s^{t, \nu}= & \Big[- 
f_{2}(s)\nabla \varphi_{s}^{t,\nu} -P_{s}( b_{2}(s)\nabla \varphi_{s}^{t,\nu})-\partial_{\nu}f_{0}(s,\nu_{s}^{t,\nu},P_{s}\nu_{s}^{t,\nu}+\varphi_{s}^{t,\nu})\nabla\nu_{s}^{t,\nu}\\
&-\partial_{\varphi}f_{0}(s,\nu_{s}^{t,\nu},P_{s}\nu_{s}^{t,\nu}+\varphi_{s}^{t,\nu}) \nabla\varphi_{s}^{t,\nu}-P_{s}\partial_{\nu}b_{0}(s,\nu_{t}^{t,\nu},P_{s}\nu_{s}^{t,\nu}+\varphi_{s}^{t,\nu})\nabla\nu_{s}^{t,\nu}\\
&-P_{s}\partial_{\varphi}b_{0}(s,\nu_{s}^{t,\nu},P_{s}\nu_{s}^{t,\nu}+\varphi_{s}^{t,\nu})\nabla\varphi_{s}^{t,\nu}\Big]ds+\nabla z^{t,\nu}_{s}dW_{s}^{0},\\
\nabla \nu_{t}^{t,\nu} =& 1, \nabla \varphi_T^{t, \nu}=h_{2}^{\prime}\left(\nu_T^{t, \nu}\right) \nabla \nu_T^{t, \nu}.
\end{aligned}\right.\label{formal differentiation}
\end{equation}
\begin{Theorem}
Under assumptions $(A1)-(A4)$, the function $\Phi \in C^{1,2}([0, T] \times \mathbb{R};\mathbb{R}^{n})$ defined as \eqref{phi decoupling} is the unique classical solution to \eqref{phi pde} with bounded $\partial_\nu \Phi, \partial_{\nu\nu}\Phi$.
\end{Theorem}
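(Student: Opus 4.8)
The plan is to follow the probabilistic decoupling-field route already used for Theorem~\ref{phi representation theorem}: realize $\Phi$ as the value at the initial time of the FBSDE \eqref{phi FBSDE}, differentiate the flow $\nu\mapsto(\nu^{t,\nu},\varphi^{t,\nu})$ once and twice in its initial datum, and finally verify \eqref{phi pde} by an It\^{o}-formula argument along the flow. The starting point is the Lipschitz regularity of $\Phi$: by Theorem~\ref{phi representation theorem} and the identity $\varphi_s^{t,\nu}=\Phi(s,\nu_s^{t,\nu})$ for $s\in[t,T]$, evaluation at $s=t$ gives $\Phi(t,\nu)=\mathbb{E}[Y_t^{t,\xi}\mid\mathcal{F}_t^{W^0}]-P_t\nu$ for any $\xi$ with $\mathbb{E}[\xi\mid\mathcal{F}_t^{W^0}]=\nu$, so the uniform Lipschitz estimate \eqref{expectation uniform} of Lemma~\ref{map lemma} (applied to initial data $\xi_i$ with $\mathbb{E}[\xi_i\mid\mathcal{F}_t^{W^0}]=\nu_i$) together with the bound on $P$ from Lemma~\ref{p lemma} makes $\nu\mapsto\Phi(t,\nu)$ Lipschitz uniformly in $t$, with at most linear growth. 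In particular, substituting $\varphi_s^{t,\nu}=\Phi(s,\nu_s^{t,\nu})$ turns the forward part of \eqref{phi FBSDE} into a genuine It\^{o} SDE with coefficients Lipschitz in $\nu$.

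For first-order differentiability I would argue that the linear FBSDE \eqref{formal differentiation} is well posed on $[t,T]$ with a priori bounds $|\nabla\nu_s^{t,\nu}|\le C$ and $|\nabla\varphi_s^{t,\nu}|\le C|\nabla\nu_s^{t,\nu}|$, with $C=C(n,K,T)$. The crucial observation is that Assumption (A3), being phrased via the difference quotients $f_3,b_4,f_4$ and $h_2(\cdot,\cdot)$ of $b_0,f_0,h_2$, imposes the required sign conditions not only on those quotients but, on letting $\theta_1\to\theta_2$ and using the $C^2$ hypothesis (A4), also on the partial derivatives $\partial_\nu b_0,\partial_\varphi b_0,\partial_\nu f_0,\partial_\varphi f_0,h_2'$ (the off-diagonal conditions on $f_2^{ij}+f_4^{ij}$ taking care of the vectorial structure); combined with the bound on $P$, this places \eqref{formal differentiation} inside the class of linear FBSDEs covered by \cite{hua2022unified}. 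Next, for $\varepsilon\neq0$ the difference quotients $\varepsilon^{-1}(\nu^{t,\nu+\varepsilon}-\nu^{t,\nu})$ and $\varepsilon^{-1}(\varphi^{t,\nu+\varepsilon}-\varphi^{t,\nu})$ solve an FBSDE whose coefficients are exactly difference quotients of $b_0,f_0,h_2$ formed from the two solution paths; by the very form of (A3) this FBSDE again lies in the class of \cite{hua2022unified}, hence is well posed with estimates uniform in $\varepsilon$, and its coefficients converge as $\varepsilon\to0$ to those of \eqref{formal differentiation} by (A4), so the stability estimates give $L^2$-convergence of the difference quotients to $(\nabla\nu^{t,\nu},\nabla\varphi^{t,\nu})$. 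Evaluating at $s=t$, where $\nabla\nu_t^{t,\nu}=1$, yields $\partial_\nu\Phi(t,\nu)=\nabla\varphi_t^{t,\nu}$, bounded uniformly in $(t,\nu)$, and more generally $\partial_\nu\Phi(s,\nu_s^{t,\nu})\nabla\nu_s^{t,\nu}=\nabla\varphi_s^{t,\nu}$.

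The second derivative is obtained by differentiating \eqref{formal differentiation} once more: the resulting system for $(\nabla^2\nu^{t,\nu},\nabla^2\varphi^{t,\nu})$ is linear with the same homogeneous structure — so the same monotonicity conditions, again read off (A3), apply — plus an inhomogeneous term quadratic in $(\nabla\nu^{t,\nu},\nabla\varphi^{t,\nu})$ and hence bounded by the previous step; the same difference-quotient argument then gives $\partial_{\nu\nu}\Phi(t,\nu)=\nabla^2\varphi_t^{t,\nu}$, bounded uniformly in $(t,\nu)$. Joint continuity of $\partial_\nu\Phi,\partial_{\nu\nu}\Phi$ on $[0,T]\times\mathbb{R}$ follows from continuous dependence of the solutions of \eqref{phi FBSDE}, \eqref{formal differentiation} and of the second variational system on the initial time and datum. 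A short time-regularity estimate (expanding $\Phi(s,\nu_s^{t,\nu})-\Phi(t,\nu)$ and using boundedness of $\partial_\nu\Phi,\partial_{\nu\nu}\Phi$ together with the dynamics in \eqref{phi FBSDE}) then shows $\partial_t\Phi$ exists and is continuous, so $\Phi\in C^{1,2}$, and It\^{o}'s formula applied to $\Phi(s,\nu_s^{t,\nu})=\varphi_s^{t,\nu}$, matched against the drift in \eqref{phi FBSDE}, identifies $\Phi$ as a solution of \eqref{phi pde}, the terminal condition being immediate. For uniqueness, any classical solution $\widetilde\Phi$ with bounded $\partial_\nu\widetilde\Phi,\partial_{\nu\nu}\widetilde\Phi$ is Lipschitz in $\nu$, so plugging it into the forward part of \eqref{phi FBSDE} gives a well-posed SDE; It\^{o}'s formula shows the corresponding triple solves \eqref{phi FBSDE}, and uniqueness for that FBSDE (Theorem~\ref{phi representation theorem}) forces $\widetilde\Phi=\Phi$.

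The main obstacle is the variational/difference-quotient analysis: obtaining well-posedness and, above all, \emph{uniform} a priori estimates for the linearized FBSDE \eqref{formal differentiation}, for its difference-quotient counterparts, and then for their second-order analogues. This is precisely where the difference-quotient formulation of (A3) — rather than a bare assumption on the derivatives of $b_0,f_0,h_2$ — is indispensable: it is the difference-quotient coefficients, taken along possibly distinct solution paths and for every $\varepsilon$, that must satisfy the sign conditions underpinning the estimates of \cite{hua2022unified}, and it is the $\varepsilon$-independence of those estimates that lets one pass to the limit and identify $\partial_\nu\Phi$ and $\partial_{\nu\nu}\Phi$.
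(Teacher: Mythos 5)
Your overall architecture matches the paper's: Lipschitz regularity of $\Phi$ from Lemma~\ref{map lemma} and Lemma~\ref{p lemma}, the variational FBSDE \eqref{formal differentiation} for $\partial_\nu\Phi$, a second differentiation for $\partial_{\nu\nu}\Phi$, It\^{o} verification of \eqref{phi pde}, and the uniqueness argument via plugging $\widetilde\Phi$ into the forward SDE is word-for-word the paper's. The divergence, and the gap, is in how you obtain well-posedness of \eqref{formal differentiation} on all of $[t,T]$. You claim that letting $\theta_1\to\theta_2$ in (A3) signs the partial derivatives $\partial_\nu b_0,\partial_\varphi b_0,\partial_\nu f_0,h_2'$ and thereby places \eqref{formal differentiation} in the monotone class of \cite{hua2022unified}. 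This does not follow: \eqref{formal differentiation} is the linearization of the \emph{transformed} system \eqref{phi FBSDE}, in which $\varphi=\bar Y-P\bar X$, and the transformation mixes $P_s$ into the structural coefficients. Concretely, the coefficient of $\nabla\nu$ in the backward drift of \eqref{formal differentiation} is $-(\partial_\nu f_0+P_s\,\partial_\nu b_0)$ and the terminal coefficient is $h_2'$ alone, whereas (A3) in the limit only controls the signs of $f_1+\partial_\nu f_0$ and $h_1+h_2'$; there is no sign information on $\partial_\nu f_0+P_s\,\partial_\nu b_0$ or on $h_2'$ by itself. The same objection applies to your $\varepsilon$-difference-quotient systems and to the second-order variational system. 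Your instinct about the difference-quotient form of (A3) is correct, but it applies to the linearization of \eqref{conditional expectation FBSDE} (this is exactly how \eqref{expectation uniform} is produced), not to the $P$-transformed system \eqref{formal differentiation}.

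The paper avoids this entirely: it never verifies monotonicity for \eqref{formal differentiation}. Instead it observes that all coefficients of \eqref{formal differentiation} are bounded by some $C_1\geq C$, so standard theory gives well-posedness on a short interval $[T-\delta_0,T]$ with $\delta_0$ depending only on $C_1$; it then restarts the FBSDE on $[T-2\delta_0,T-\delta_0]$ with terminal data $\Phi(T-\delta_0,\cdot)$, whose Lipschitz constant is again at most $C$ by the \emph{global, uniform-in-$t$} estimate \eqref{phi uniform}, so the same $\delta_0$ works, and iterates backward finitely many times. The global monotonicity input is thus confined to producing \eqref{phi uniform} (via Lemma~\ref{map lemma}), and the variational equation only ever needs local solvability. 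To repair your argument you should either adopt this local-plus-iteration scheme, or carry out the variational/difference-quotient analysis on \eqref{conditional expectation FBSDE} — where the coefficients really are the quotients appearing in (A3) — and only afterwards transform by $P$ to recover $\nabla\varphi$.
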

\begin{proof} From Theorem \ref{phi representation theorem}, we know
\begin{equation}
     \Phi\left(t, \nu\right)=Y_{t}^{t,\xi}-P_{t}X_{t}^{t,\xi}, \quad \forall t\in[0,T],\label{phi definition}
 \end{equation}
 where $X^{t,\xi},Y^{t,\xi}$ are the first two components of the unique solution of conditional mean field FBSDE \eqref{conditional fbsde} with initial condition $\xi\in L^{2}_{F_{0}}$ satisfying $\mathbb{E}[\xi] = \nu$ and $P$ is the unique solution of \eqref{p equation}.
 Taking conditional expectation on both sides of
\eqref{phi definition}, we get
\begin{equation*}
\Phi(t,\nu)=
    \mathbb{E}[Y_{t}^{t,\xi}|\mathcal{F}^{W^{0}}_{t}] - P_{t}\nu, \quad \forall t\in[0,T].
\end{equation*}
Then from Lemma \ref{p lemma} and Lemma \ref{map lemma},  we obtain that  
\begin{equation}
    \left|\Phi(t,\nu_{1})-\Phi(t,\nu_{2})\right| \leq C\left|\nu_{1}-\nu_{2}\right|, \forall \nu_{1},\nu_{2} \in \mathbb{R},\quad \forall t\in[0,T],\label{phi uniform}
\end{equation}
where C only depending on $n,K,T$. 

Now we would like to show that $\Phi\in C^{1,2}([0,T]\times \mathbb{R};\mathbb{R}^{n})$ with bounded $\partial_{\nu}\Phi$, $\partial_{\nu\nu}\Phi$. First, we show $\Phi \in C^{1}([0,T]\times \mathbb{R};\mathbb{R}^{n})$ with bounded $\partial_{\nu}\Phi$.
 Let us consider the linear FBSDE \eqref{formal differentiation} on $\left[t, T\right]$ for any $t\in[0, T)$. Note that under assumptions (A1)-(A3), all the coefficients in FBSDE \eqref{formal differentiation} are bounded by some chosen $C_1 \geq C$. By standard FBSDE arguments, there exists some $\delta_0>0$ depending on $C_{1}$ such that the FBSDE \eqref{formal differentiation} is well-posed on $\left[T-\delta_0, T\right]$, which implies that $\partial_{\nu} \Phi \in C^{0}([T-\delta,T]\times \mathbb{R};\mathbb{R}^{n})$. Combined with \eqref{phi uniform}, following standard arguments, we obtain that $\Phi\in C^{1}([T-\delta,T]\times \mathbb{R};\mathbb{R}^{n})$. We then consider the FBSDE \eqref{formal differentiation} with $h_{2}(\cdot)$ replaced by $\Phi\left(T-\delta_0, \cdot\right)$. According to \eqref{phi uniform}, the FBSDE \eqref{formal differentiation} is also well-posed on $\left[T-2 \delta_0, T-\delta_0\right]$. Repeating this procedure backwardly and finitely many
times, we are able to show that the FBSDE \eqref{formal differentiation} is well-posed on $\left[t, T\right]$ for any $t\in[0,T]$ and  $\Phi \in C^{1}([0, T] \times \mathbb{R};\mathbb{R}^{n})$ with bounded $\partial_{\nu}\Phi$. Next, by differentiating \eqref{formal differentiation} in $\nu$ again and using \eqref{phi uniform}, we can further show that $\Phi \in C^{1,2}([0, T] \times\mathbb{R};\mathbb{R}^{n})$ with bounded $\partial_{\nu\nu}\Phi$. Consequently, $\Phi$ is a classical solution of PDE \eqref{phi pde}.\\
\textbf{Uniqueness:} Suppose that $\tilde{\Phi}\in C^{1,2}([0, T] \times \mathbb{R};\mathbb{R}^{n})$ is another classical solution to \eqref{phi pde} with bounded $\partial_\nu \tilde{\Phi},\partial_{\nu\nu} \tilde{\Phi}$. For any $\left(t, \nu\right) \in[0, T] \times \mathbb{R}$, we first consider the following well-posed SDE
\begin{equation*}
\left\{\begin{aligned}
d \tilde{\nu}_s^{t, \nu} &= \left[ (b_{1}(s) +b_{2}(s) P_{s})\tilde{\nu}_s^{t, \nu} + b_{2}(s) \tilde{\Phi}(s,\tilde{\nu}_s^{t, \nu}) +b_{0}(s,\tilde{\nu}_s^{t, \nu},P_{s}\tilde{\nu}_s^{t, \nu}+\tilde{\Phi}(s,\tilde{\nu}_s^{t, \nu}))\right]ds\\
&\quad+\Big[\sigma_{1}(s)\tilde{\nu}_s^{t, \nu}+(P_{s}\tilde{\nu}_s^{t, \nu}+\tilde{\Phi}(s,\tilde{\nu}_s^{t, \nu}))^{\mathsf{T}}\sigma_{2}(s)+\sigma_{0}(s,\tilde{\nu}_s^{t, \nu},P_{s}\tilde{\nu}_s^{t, \nu}+\tilde{\Phi}(s,\tilde{\nu}_s^{t, \nu}))\Big]dW_{s}^{0}\\
\tilde{\nu}_{t}^{t, \nu} &=   \nu.
\end{aligned}\right.
\end{equation*}
Let $\tilde{\varphi}_s^{t, \nu}:=\tilde{\Phi}\left(s, \tilde{\nu}_s^{t, \nu}\right)$. Since $\tilde{\Phi}$ is a classical solution to \eqref{phi pde}, it can be easily checked that $\tilde{\varphi}^{t, \nu}$ solves the BSDE in \eqref{phi FBSDE}. Therefore we have verified that $\left(\tilde{\nu}^{t, \nu}, \tilde{\varphi}^{t, \nu}\right)$ are the first two components of a solution to  FBSDE \eqref{phi FBSDE} with initial condition $\nu$. Therefore, the uniqueness result follows by the well-posedness of the FBSDE \eqref{phi FBSDE}.
\end{proof}

\begin{Theorem}\label{master theorem}
 Let Assumptions $(A1)-(A4)$ hold, then the function 
 \begin{equation}
U(t,x,\nu):=P_{t}x+\Phi(t,\nu)\label{U definition}
 \end{equation}
is the unique classical solution to the master equation \eqref{general master equation} with bounded $\partial_{x}U,\partial_{\nu}U, \partial_{\nu\nu}U$.
\end{Theorem}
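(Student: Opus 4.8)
My plan is to verify directly that $U(t,x,\nu)=P_tx+\Phi(t,\nu)$ solves the master equation \eqref{general master equation}, and then to deduce uniqueness from the well-posedness of the conditional mean field FBSDE \eqref{conditional fbsde} together with the representation formula of Theorem \ref{phi representation theorem}. First I would record the regularity of the candidate. By Lemma \ref{p lemma}, $P$ is bounded and, being the solution of the Riccati ODE \eqref{p equation}, it is differentiable in $t$; by the $C^{1,2}$-regularity theorem for $\Phi$ established just above, $\Phi\in C^{1,2}([0,T]\times\mathbb{R};\mathbb{R}^n)$ with $\partial_\nu\Phi$, $\partial_{\nu\nu}\Phi$ bounded and $\Phi$ solving \eqref{phi pde}. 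Consequently $U\in C^{1,2}([0,T]\times\mathbb{R}\times\mathbb{R}^n;\mathbb{R}^n)$ with $\partial_xU=P_t$, $\partial_{xx}U=\partial_{x\nu}U=0$, $\partial_\nu U=\partial_\nu\Phi$, $\partial_{\nu\nu}U=\partial_{\nu\nu}\Phi$ and $\partial_tU=\dot P_tx+\partial_t\Phi$; in particular $\partial_xU$, $\partial_\nu U$, $\partial_{\nu\nu}U$ are bounded, which yields the asserted bounds once $U$ is shown to be a solution.

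Next I would substitute $U=P_tx+\Phi(t,\nu)$ into \eqref{general master equation}. Since $\mathbb{E}[\xi]=\nu$, the nonlocal term reduces to $\mathbb{E}[U(t,\xi,\nu)]=P_t\nu+\Phi(t,\nu)$, while the $\partial_{xx}U$-, $\partial_{x\nu}U$- and $\tfrac12\partial_{xx}U\,\sigma^2$-terms vanish because $\partial_{xx}U=\partial_{x\nu}U=0$. Collecting the contributions that are affine in $x$ gives the coefficient $\dot P_t+b_1(t)P_t+(b_2(t)P_t)P_t+f_2(t)P_t+f_1(t)$, which is zero by the Riccati equation \eqref{p equation}; collecting the remaining, $x$-independent, contributions reproduces term by term the left-hand side of the PDE \eqref{phi pde} for $\Phi$, hence is also zero. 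The terminal condition is immediate: $U(T,x,\nu)=P_Tx+\Phi(T,\nu)=h_1x+h_2(\nu)$, using $P_T=h_1$ from Lemma \ref{p lemma} and $\Phi(T,\cdot)=h_2$. This shows that $U$ is a classical solution of \eqref{general master equation} with the stated bounds.

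For uniqueness, suppose $\tilde U\in C^{1,2}$ is another classical solution with $\partial_x\tilde U$, $\partial_\nu\tilde U$, $\partial_{\nu\nu}\tilde U$ bounded. I would fix $t\in[0,T]$ and $\xi\in L^2_{\mathcal{F}_t}$ and let $X$ solve, on $[t,T]$, the conditional McKean--Vlasov SDE obtained by plugging $Y_s:=\tilde U(s,X_s,\nu_s)$ and $\nu_s:=\mathbb{E}[X_s|\mathcal{F}_s^{W^0}]$ into the forward equation of \eqref{conditional fbsde}; this SDE is well-posed since $\tilde U$ is Lipschitz in $(x,\nu)$, and $\nu$ then solves the corresponding SDE over $\mathbb{F}^0$. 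Applying It\^{o}'s formula to $s\mapsto\tilde U(s,X_s,\nu_s)$ and using that $\tilde U$ solves \eqref{general master equation}, together with the fact that along this flow the frozen nonlocal argument $\mathbb{E}[\tilde U(s,\xi,\nu)]$ equals $\mathbb{E}[Y_s|\mathcal{F}_s^{W^0}]$, one finds that $(X,Y,Z,Z^0)$, with $Z$ and $Z^0$ read off from the martingale part of the expansion, solves the conditional mean field FBSDE \eqref{conditional fbsde}. By the uniqueness part of Theorem \ref{global general theorem} and the representation \eqref{u x v} of Theorem \ref{phi representation theorem}, $Y_t=P_tX_t+\Phi(t,\nu_t)$, i.e. $\tilde U(t,X_t,\nu_t)=P_tX_t+\Phi(t,\nu_t)=U(t,X_t,\nu_t)$; since $\xi$ is arbitrary and $(\Omega,\mathcal{F},\mathbb{P})$ is atomless, the pair $(X_t,\nu_t)=(\xi,\mathbb{E}[\xi|\mathcal{F}_t^{W^0}])$ can be made to attain any prescribed value on a set of positive probability, so by continuity $\tilde U\equiv U$.

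I expect the substitution step to be routine once the $C^{1,2}$-regularity of $\Phi$ is in hand (that regularity being the real work, already carried out above). The main obstacle is the uniqueness argument: one must justify carefully the It\^{o} expansion of $\tilde U(s,X_s,\nu_s)$, which requires the explicit $\mathbb{F}^0$-dynamics of the conditional mean $\nu_s$ and the correct bookkeeping of the two independent Brownian motions $W$ and $W^0$, and then check that the drift and diffusion coefficients produced by the expansion coincide exactly with those of \eqref{conditional fbsde}, so that Theorem \ref{global general theorem} can be invoked.
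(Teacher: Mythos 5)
Your proposal is correct and follows essentially the same route as the paper: existence by direct substitution of $U(t,x,\nu)=P_tx+\Phi(t,\nu)$ into \eqref{general master equation}, with the $x$-affine terms cancelling via the Riccati equation \eqref{p equation} and the remaining terms via the PDE \eqref{phi pde}, and uniqueness by identifying any classical solution as the decoupling field of the conditional mean field FBSDE \eqref{conditional fbsde} and invoking Theorem \ref{global general theorem}. Your uniqueness step is in fact spelled out in more detail (the It\^o expansion along the McKean--Vlasov flow) than the paper's brief appeal to the same argument, but the underlying idea is identical.
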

\begin{proof}
\textbf{Existence:} First, by Theorem \ref{phi representation theorem}, we know for $t\in[0,T]$, $s\in[t,T]$, 
\begin{equation}
    Y_{s}^{t,\xi} = P_{s}X^{t,\xi}_{s} + \varphi_{s}^{t,\nu}= P_{s}X^{t,\xi}_{s}+\Phi(s,\mathbb{E}[X^{t,\xi}_{s}|\mathcal{F}_{s}^{0}]) = U(t,X^{t,\xi}_{s},\mathbb{E}[X_{s}^{t,\xi}|\mathcal{F}^{W^{0}}_{s}]),\label{U decoupling}
\end{equation}
where $\xi \in L^{2}_{\mathcal{F}_{0}}$ with $\mathbb{E}[\xi] = \nu$ and $(X^{t,\xi},Y^{t,\xi})$ is the first two components of solution of conditional mean field FBSDE \eqref{conditional fbsde} with initial condition $\xi$ and $\varphi^{t,\nu}$ is the second component of solution for FBSDE \eqref{phi FBSDE} with initial condition $\nu\in \mathbb{R}$. Therefore, the function $U(t,x,\nu)$ is the decoupling field of conditional mean field FBSDE \eqref{conditional fbsde}. \\
\indent Next, we verify the decoupling field $U$ satisfies the master equation \eqref{general master equation}. We first check that $U$ satisfies the terminal condition
\begin{equation*}
U(T, x, \nu)=P_T x+\Phi(T, \nu)=h_{1}x+h_{2}(\nu).
\end{equation*}
Moreover, it follows from \eqref{U decoupling} by setting $s=t$ and taking conditional expectation with respect to $\mathcal{F}^{W^{0}}_{t}$ that
\begin{equation}
P_{t}\nu+\Phi(t,\nu)=\mathbb{E}[U(t,\xi,\nu)],\label{u expectation}
\end{equation}
where $\xi \in L^{2}_{\mathcal{F}_{0}}$ with $\mathbb{E}[\xi] = \nu$.\\
Recalling \eqref{p equation} and \eqref{phi pde}, we obtain
\begin{equation}
\begin{aligned}
 \partial_t U(t, x, \nu)&=\partial_t P_t x+\partial_t \Phi(t, \nu) \\
& =-\Big[(b_{2}(t) P_{t})P_{t}+f_{2}(t) P_{t}+b_{1}(t)P_{t}+f_{1}(t)\Big] x-\partial_{\nu}\Phi(t,\nu)\Big[(b_{1}(t)+b_{2}(t)P_{t})\nu\\
&\quad \quad+b_{2}(t)\Phi(t,\nu)+b_{0}(t,\nu,P_{t}\nu+\Phi(t,\nu))\Big]\\
&\quad-\frac{1}{2}\partial_{\nu \nu}\Phi(t,\nu)\Big|\sigma_{1}(t)\nu+(P_{t}\nu+\Phi(t,\nu))^{\mathsf{T}}\sigma_{2}(t)+\sigma_{0}(t,\nu,P_{t}\nu+\Phi(t,\nu))\Big|^{2}\\
&\quad-\Big[(b_{2}(t))\Phi(t,\nu))P_{t}+f_{2}(t)\Phi(t,\nu)+f_{0}(t,\nu,P_{t}\nu+\Phi(t,\nu))+P_{t}b_{0}(t,\nu,P_{t}\nu+\Phi(t,\nu))\Big].
\end{aligned}\label{master equation2}
\end{equation}
Moreover, we have
\begin{equation}
\begin{aligned}
\partial_x U(t, x, \nu)&=P_t, \quad \partial_{x x} U(t, x, \nu)=\partial_{x\nu}U(t,x,\nu) = 0,\\
\quad \partial_\nu U(t, x, \nu)&=\partial_\nu \Phi(t, \nu), \quad \partial_{\nu\nu}U(t,x,\nu) = \partial_{\nu\nu} \Phi(t,\nu)   
\end{aligned}
\end{equation}
are all bounded. Plugging the above terms into \eqref{general master equation} and using \eqref{U definition}, \eqref{u expectation}, it is straightforward to show that $U$ is a classical solution to the master equation \eqref{general master equation}.\\
\textbf{Uniqueness:} We recall that the solution $U$ to \eqref{general master equation} serves as the decoupling field of conditional mean field FBSDE \eqref{conditional fbsde}. Following the uniqueness argument in Theorem \ref{global general theorem}, the well-posedness of \eqref{conditional fbsde} implies the uniqueness of a solution to the master equation \eqref{general master equation}.
\end{proof}
\begin{remark}
It is worth emphasizing that we establish global well-posedness of master equation with nonseparable Hamiltonian and our monotonicity condition is dichotomy with the well-known Lasry-Lions motononicity and displacement monotonocity conditions. Compared with our previous work \cite{hua2023well}, the master equation \eqref{general master equation} involves the second-order derivative term of $U$ with respect to $\nu$ since the presence of the common noises, which further requires us to prove second order differentiability of function $\Phi$.
\end{remark}
\bibliographystyle{siam}
\bibliography{bib}

\begin{thebibliography}{10}

\bibitem{ahuja2016wellposedness}
{\sc S.~Ahuja}, {\em {Wellposedness of mean field games with common noise under
  a weak monotonicity condition}}, SIAM Journal on Control and Optimization, 54
  (2016), pp.~30--48.

\bibitem{ahuja2019forward}
{\sc S.~Ahuja, W.~Ren, and T.-W. Yang}, {\em {Forward--backward stochastic
  differential equations with monotone functionals and mean field games with
  common noise}}, Stochastic Processes and their Applications, 129 (2019),
  pp.~3859--3892.

\bibitem{alasseur2020extended}
{\sc C.~Alasseur, I.~Ben~Taher, and A.~Matoussi}, {\em {An extended mean field
  game for storage in smart grids}}, Journal of Optimization Theory and
  Applications, 184 (2020), pp.~644--670.

\bibitem{barraso2022}
{\sc A.~Barrasso and N.~Touzi}, {\em {Controlled Diffusion Mean Field Games
  with Common Noise and McKean--Vlasov Second Order Backward SDEs}}, Theory of
  Probability \& Its Applications, 66 (2022), pp.~613--639.

\bibitem{bensoussan2019control}
{\sc A.~Bensoussan and S.~C.~P. Yam}, {\em {Control problem on space of random
  variables and master equation}}, ESAIM: Control, Optimisation and Calculus of
  Variations, 25 (2019), p.~10.

\bibitem{bertucci2021master}
{\sc C.~Bertucci, J.-M. Lasry, and P.-L. Lions}, {\em {Master equation for the
  finite state space planning problem}}, Archive for Rational Mechanics and
  Analysis, 242 (2021), pp.~327--342.

\bibitem{cardaliaguet2022splitting}
{\sc P.~Cardaliaguet, M.~Cirant, and A.~Porretta}, {\em {Splitting methods and
  short time existence for the master equations in mean field games}}, Journal
  of the European Mathematical Society, 5 (2022), pp.~1823--1918.

\bibitem{cardaliaguet2019master}
{\sc P.~Cardaliaguet, F.~Delarue, J.-M. Lasry, and P.-L. Lions}, {\em {The
  master equation and the convergence problem in mean field games:(ams-201)}},
  Princeton University Press, 2019.

\bibitem{cardaliaguet2018mean}
{\sc P.~Cardaliaguet and C.-A. Lehalle}, {\em {Mean field game of controls and
  an application to trade crowding}}, Mathematics and Financial Economics, 12
  (2018), pp.~335--363.

\bibitem{carmona2013probabilistic}
{\sc R.~Carmona and F.~Delarue}, {\em {Probabilistic analysis of mean-field
  games}}, SIAM Journal on Control and Optimization, 51 (2013), pp.~2705--2734.

\bibitem{10.1214/14-AOP946}
{\sc R.~Carmona and F.~Delarue}, {\em {Forward–backward stochastic
  differential equations and controlled McKean–Vlasov dynamics}}, The Annals
  of Probability, 43 (2015), pp.~2647 -- 2700.

\bibitem{carmona2018probabilistic}
{\sc R.~Carmona and F.~Delarue}, {\em {Probabilistic theory of mean field games
  with applications I-II}}, Springer, 2018.

\bibitem{carmonaweak}
{\sc R.~Carmona and D.~Lacker}, {\em {A probabilistic weak formulation of mean
  field games and applications}}, The Annals of Applied Probability, 25 (2015),
  pp.~1189 -- 1231.

\bibitem{carmona2021probabilistic}
{\sc R.~Carmona and P.~Wang}, {\em {A probabilistic approach to extended finite
  state mean field games}}, Mathematics of Operations Research, 46 (2021),
  pp.~471--502.

\bibitem{chassagneux2014probabilistic}
{\sc J.-F. Chassagneux, D.~Crisan, and F.~Delarue}, {\em A probabilistic
  approach to classical solutions of the master equation for large population
  equilibria}, Memoirs of the American Mathematical Society, 280 (2022),
  pp.~v+123.

\bibitem{delarue2002existence}
{\sc F.~Delarue}, {\em {On the existence and uniqueness of solutions to FBSDEs
  in a non-degenerate case}}, Stochastic processes and their applications, 99
  (2002), pp.~209--286.

\bibitem{dianetti2022strong}
{\sc J.~Dianetti}, {\em {Strong solutions to submodular mean field games with
  common noise and related McKean-Vlasov FBSDEs}}, arXiv preprint
  arXiv:2212.12413,  (2022).

\bibitem{dianetti2021submodular}
{\sc J.~Dianetti, G.~Ferrari, M.~Fischer, and M.~Nendel}, {\em {Submodular mean
  field games: Existence and approximation of solutions}}, The Annals of
  Applied Probability, 31 (2021), pp.~2538--2566.

\bibitem{dianetti2022unifying}
{\sc J.~Dianetti, G.~Ferrari, M.~Fischer, and M.~Nendel}, {\em A unifying
  framework for submodular mean field games}, Mathematics of Operations
  Research, 48 (2023), pp.~1679--1710.

\bibitem{djete2023mean}
{\sc M.~F. Djete}, {\em {Mean field games of controls: on the convergence of
  Nash equilibria}}, The Annals of Applied Probability, 33 (2023),
  pp.~2824--2862.

\bibitem{gangbo2022global}
{\sc W.~Gangbo and A.~R. M{\'e}sz{\'a}ros}, {\em {Global Well-Posedness of
  Master Equations for Deterministic Displacement Convex Potential Mean Field
  Games}}, Communications on Pure and Applied Mathematics, 75 (2022),
  pp.~2685--2801.

\bibitem{gangbo2022mean}
{\sc W.~Gangbo, A.~R. M{\'e}sz{\'a}ros, C.~Mou, and J.~Zhang}, {\em {Mean field
  games master equations with nonseparable Hamiltonians and displacement
  monotonicity}}, The Annals of Probability, 50 (2022), pp.~2178--2217.

\bibitem{gangbo2015existence}
{\sc W.~Gangbo and A.~{\'S}wi{\k{e}}ch}, {\em {Existence of a solution to an
  equation arising from the theory of mean field games}}, Journal of
  Differential equations, 259 (2015), pp.~6573--6643.

\bibitem{graber2023110095}
{\sc P.~J. Graber and A.~R. Mészáros}, {\em {On monotonicity conditions for
  mean field games}}, Journal of Functional Analysis, 285 (2023), p.~110095.

\bibitem{hua2022unified}
{\sc T.~Hua and P.~Luo}, {\em {A unified approach to global solvability for
  FBSDEs with diagonal generators}}, arXiv preprint arXiv:2211.00913,  (2022).

\bibitem{hua2023well}
{\sc T.~Hua and P.~Luo}, {\em {Well-posedness for a class of mean field type
  FBSDEs and classical solutions of related master equations}}, arXiv preprint
  arXiv:2307.11536,  (2023).

\bibitem{huang2007large}
{\sc M.~Huang, P.~E. Caines, and R.~P. Malham{\'e}}, {\em {Large-population
  cost-coupled LQG problems with nonuniform agents: individual-mass behavior
  and decentralized $\varepsilon $-Nash equilibria}}, IEEE transactions on
  automatic control, 52 (2007), pp.~1560--1571.

\bibitem{huang2022mean}
{\sc Z.~Huang and S.~Tang}, {\em {Mean Field Games with Common Noises and
  Conditional Distribution Dependent FBSDEs}}, Chinese Annals of Mathematics,
  Series B, 43 (2022), pp.~523--548.

\bibitem{lasry2007mean}
{\sc J.-M. Lasry and P.-L. Lions}, {\em {Mean field games}}, Japanese journal
  of mathematics, 2 (2007), pp.~229--260.

\bibitem{li2023linear}
{\sc M.~Li, C.~Mou, Z.~Wu, and C.~Zhou}, {\em {Linear-quadratic mean field
  games of controls with non-monotone data}}, Transactions of the American
  Mathematical Society, 376 (2023), pp.~4105--4143.

\bibitem{lions2007cours}
{\sc P.-L. Lions}, {\em {Cours au college de france}}, Available at www.
  college-de-france. fr,  (2007).

\bibitem{mou2022mean}
{\sc C.~Mou and J.~Zhang}, {\em {Mean field game master equations with
  anti-monotonicity conditions}}, arXiv preprint arXiv:2201.10762,  (2022).

\end{thebibliography}
\end{document}